\newtheorem{theorem}{Theorem}[section]
\newtheorem{corollary}[theorem]{Corollary}
\newtheorem{lemma}[theorem]{Lemma}
\newtheorem{proposition}[theorem]{Proposition}
\theoremstyle{definition}
\newtheorem{remark}[theorem]{Remark}
\newtheorem*{question*}{Question}
\newtheorem*{conjecture*}{Conjecture}
\newtheorem{example}[theorem]{Example}
\newtheorem*{notation*}{Notation}
\newtheorem*{claim*}{Claim}
\newtheorem{definitiontheorem}[theorem]{Definition-Theorem}
\numberwithin{equation}{theorem}
\def\Hom{\operatorname{Hom}}
\def\End{\operatorname{End}}
\def\op{{\rm op}}
\def\Kb{\mathsf{K}^{\rm b}}
\def\thick{\operatorname{\mathsf{thick}}}
\def\add{\operatorname{\mathsf{add}}}
\def\proj{\operatorname{\mathsf{proj}}}
\def\silt{\operatorname{\mathsf{silt}}}
\def\tilt{\operatorname{\mathsf{tilt}}}
\def\twsilt{\operatorname{\mathsf{2silt}}}
\def\E{\mathcal{E}}
\def\T{\mathcal{T}}
\def\K{\mathcal{K}}
\def\H{\mathcal{H}}
\def\S{\mathbb{S}}
\begin{document}
\setlength{\baselineskip}{15pt}

\title{A symmetry of silting quivers}

\author{Takuma Aihara}
\address{Department of Mathematics, Tokyo Gakugei University, 4-1-1 Nukuikita-machi, Koganei, Tokyo 184-8501, Japan}
\email{aihara@u-gakugei.ac.jp}

\author{Qi Wang}
\address{Yau Mathematical Sciences Center, Tsinghua University, Hai Dian District, Beijing 100084, China}
\email{infinite-wang@outlook.com (cc:infinite-wang@tsinghua.edu.cn)}

\keywords{silting object, silting mutation, silting quiver, support $\tau$-tilting module, support $\tau$-tilting quiver, anti-automorphism}
\thanks{2010 {\em Mathematics Subject Classification.} 16G20, 16G60}
\thanks{TA was partly supported by JSPS Grant-in-Aid for Young Scientists 19K14497.
QW was partly supported by JSPS Grant-in-Aid for Young Scientists 20J10492.}

\begin{abstract}
We investigate symmetry of the silting quiver of a given algebra which is induced by an anti-automorphism of the algebra.
In particular, one shows that if there is a primitive idempotent fixed by the anti-automorphism, then the 2-silting quiver ($=$ the support $\tau$-tilting quiver) has a bisection.
Consequently, in that case, we obtain that the cardinality of the 2-silting quiver is an even number (if it is finite).
\end{abstract}
\maketitle
\section*{Introduction}

In this paper, we study symmetry of the silting quiver of a finite dimensional algebra $\Lambda$ over an algebraically closed field; the \emph{silting quiver} is a quiver whose vertices are (basic) silting objects and arrows $T\to U$ are drawn whenever $U$ is an irreducible left mutation of $T$, and it coincides with the Hasse quiver of the poset $\silt\Lambda$ of silting objects \cite{AI}.

The main theorem (Theorem \ref{symmetry}) of this paper shows that an anti-automorphism of $\Lambda$ (i.e., an algebra isomorphism $\Lambda^\op\simeq\Lambda$) induces a symmetry of $\silt\Lambda$.
Here, $\Lambda^\op$ stands for the opposite algebra of $\Lambda$.
Focusing on 2-term silting objects, which bijectively correspond to support $\tau$-tilting modules \cite{AIR}, we obtain a bisection of the poset $\twsilt\Lambda$ of 2-term silting objects if there is a fixed primitive idempotent by the anti-automorphism (Theorem \ref{double}).
Thus, in that case, it turns out that the cardinality of $\twsilt\Lambda$ is even (if it is finite).

When $\Lambda$ is 2-silting finite ($= \tau$-tilting finite); i.e., $|\twsilt\Lambda|<\infty$, counting the number of elements in $\twsilt\Lambda$ is one of the important problems in this area; see \cite{A, AA, AHMW, EJR, Mizuno-preprojective-alg}.
In this context, Theorem \ref{double} gives a very useful method to reduce the whole pattern to half of $\twsilt\Lambda$.
Indeed, this may be applied to such works on Hecke algebras and Schur algebras, see \cite{AS-Hecke}, \cite{W-schur}, etc.

For example, the following admit anti-automorphisms fixing a primitive idempotent:
\begin{itemize}
\item enveloping algebras (Theorem \ref{envelopingalgebra});

\item preprojective algebras of Dynkin type (Theorem \ref{ppas});

\item cellular algebras (Theorem \ref{cellular-alg});

\item symmetric algebras with radical cube zero, which contain multiplicity-free Brauer line/cycle algebras (Theorem \ref{rcz});

\item selfinjective Nakayama algebras, which contain Brauer star algebras with an exceptional vertex in the center (Theorem \ref{Nakayama});

\item group algebras (Theorem \ref{ga});

\item the trivial extensions of algebras with an anti-automorphism fixing a primitive idempotent (Theorem \ref{trivial-extension}).
\end{itemize}
Here is an illustration of the symmetry of $\twsilt\Lambda$ for the preprojective algebra $\Lambda$ of Dynkin type $A_3$,
in which $\xymatrix{*+[F]{-}}$ and $\xymatrix{*+[o][F]{-}}$ correspond:
\[\xymatrix@C=0.9cm
{
&&*++[F]{9}\ar[rr]\ar@/_1cm/[ddddrrr]&&*++[o][F]{11}\ar@/^1cm/[ddddrrr]&*++[F]{11}\ar[rr]\ar[dr]&&*++[o][F]{9}\ar[dr]&&\\
&*++[F]{2}\ar[ur]\ar[dr]&&*++[o][F]{5}\ar[ur]\ar[drr]&&&*++[F]{5}\ar[drr]&&*++[o][F]{2}\ar[dr]&\\
*++[F]{1}\ar[r]\ar[ur]\ar[dr]&*++[o][F]{4}\ar[urr]\ar[drr]&*++[F]{7}\ar[rr]&&*++[F]{10}\ar[urr]\ar[drr]&*++[o][F]{10}\ar[rr]&&*++[o][F]{7}\ar[dr]\ar[ur]&*++[F]{4}\ar[r]&*++[o][F]{1}\\
&*++[F]{3}\ar[dr]\ar[ur]&&*++[o][F]{6}\ar[dr]\ar[urr]&&&*++[F]{6}\ar[urr]&&*++[o][F]{3}\ar[ur]&\\
&&*++[F]{8}\ar[rr]\ar@/^1cm/[uuuurrr]&&*++[o][F]{12}\ar@/_1cm/[uuuurrr]&*++[F]{12}\ar[ur]\ar[rr]&&*++[o][F]{8}\ar[ur]&&
}
\]

\begin{notation*}
Throughout this paper, let $\Lambda$ be a finite dimensional algebra over an algebraically closed field $K$,
and $\K_\Lambda:=\Kb(\proj\Lambda)$ denote the perfect derived category of $\Lambda$.
The $\Lambda$-dual is denoted by $(-)^*:=\Hom_?(-, \Lambda)$ for $?=\K_\Lambda$ or $\K_{\Lambda^\op}$.
\end{notation*}

\section{Results}

We say that an object $T$ of $\K_\Lambda$ is \emph{silting} (\emph{tilting}) if it satisfies $\Hom_{\K_\Lambda}(T, T[i])=0$ for every integer $i>0\ (i\neq0)$ and $\K_\Lambda=\thick T$.
Here, $\thick T$ stands for the smallest thick subcategory of $\K_\Lambda$ containing $T$.
We denote by $\silt\Lambda\ (\tilt\Lambda)$ the set of isomorphism classes of basic silting (tilting) objects in $\K_\Lambda$.

Let us recall silting mutation and a partial order on $\silt\Lambda$.

\begin{definitiontheorem}\cite[Theorem 2.11, 2.31, 2.35 and Definition 2.41]{AI}
\begin{enumerate}
\item Let $T$ be a silting object of $\K_\Lambda$ with decomposition $T=X\oplus M$.
Taking a minimal left $\add M$-approximation $f:X\to M'$ of $X$,
we construct a new object $\mu_X^-(T):=Y\oplus M$, where $Y$ is the mapping cone of $f$.
Then $\mu_X^-(T)$ is also silting, and we call it the \emph{left mutation} of $T$ with respect to $X$.
Dually, we define the \emph{right mutation} $\mu_X^+(T)$ of $T$ with respect to $X$.

\item For objects $T$ and $U$ of $\K_\Lambda$, we write $T\geq U$ if $\Hom_{\K_\Lambda}(T, U[i])=0$ for $i>0$.
Then $\geq$ gives a partial order on $\silt\Lambda$.

\item We construct the \emph{silting quiver} $\H$ of $\K_\Lambda$ as follows.
\begin{itemize}
\item The vertices of $\H$ are basic silting objects of $\K_\Lambda$;
\item We draw an arrow $T\to U$ if $U$ is a left mutation of $T$ with respect to an indecomposable direct summand.
\end{itemize}
Then $\H$ coincides with the Hasse quiver of the partially ordered set $\silt\Lambda$.
\end{enumerate}
\end{definitiontheorem}

We define a subset of $\silt\Lambda$ by
\[\twsilt\Lambda:=\{T\in\silt\Lambda\ |\ \Lambda\geq T\geq \Lambda[1] \}.\]
This bijectively corresponds to the poset of support $\tau$-tilting modules \cite[Theorem 3.2]{AIR}.

We say that $\Lambda$ \emph{admits an anti-automorphism} if there is a $K$-linear automorphism $\varsigma:\Lambda\to\Lambda$ satisfying $\varsigma(xy)=\varsigma(y)\varsigma(x)$, or equivalently if an algebra isomorphism $\sigma:\Lambda^\op\to\Lambda$ exists.
Here, $\Lambda^\op$ stands for the opposite algebra of $\Lambda$.
In this case, we obtain an equivalence $\K_{\Lambda^\op}\to\K_\Lambda$, also denoted by $\sigma$.

We now investigate that an anti-automorphism of $\Lambda$ induces a symmetry of $\silt\Lambda$/$\twsilt\Lambda$.

\begin{theorem}\label{symmetry}
Assume $\Lambda$ admits an anti-automorphism $\sigma$.
Then we have the following.
\begin{enumerate}
\item The functor $\S_\sigma:=\sigma\circ(-)^*$ induces an anti-automorphism of the poset $\silt\Lambda$.
\item Let $T$ be a silting object.
Then there is an algebra isomorphism $\End_{\K_\Lambda}(T)^\op\simeq\End_{\K_\Lambda}(\S_\sigma(T))$.
Moreover, if $\Gamma$ is derived equivalent to $\Lambda$, then so is $\Gamma^\op$; hence, $\Gamma$ and $\Gamma^\op$ are also derived equivalent.
\item The functor $S_\sigma:=[1]\circ\S_\sigma$ induces an anti-automorphism of the poset $\twsilt\Lambda$.
\end{enumerate}
\end{theorem}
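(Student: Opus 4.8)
The plan is to treat $\S_\sigma=\sigma\circ(-)^*$ as a single contravariant triangle auto-equivalence of $\K_\Lambda$ and to extract all three statements from two structural identities: a Hom-duality and the reversal of shifts. First I would record that $(-)^*=\Hom_{\K_\Lambda}(-,\Lambda)\colon\K_\Lambda\to\K_{\Lambda^\op}$ is a duality (contravariant triangle equivalence) whose quasi-inverse is the opposite dual, and that on a bounded complex of projectives it satisfies $(X[i])^*\simeq X^*[-i]$ together with $\Hom_{\K_{\Lambda^\op}}(X^*,Y^*)\simeq\Hom_{\K_\Lambda}(Y,X)$. Since $\sigma\colon\K_{\Lambda^\op}\to\K_\Lambda$ is a covariant triangle equivalence commuting with $[1]$, composing gives that $\S_\sigma$ is a contravariant triangle auto-equivalence of $\K_\Lambda$ with $\S_\sigma(X[i])\simeq\S_\sigma(X)[-i]$ and $\Hom_{\K_\Lambda}(\S_\sigma X,\S_\sigma Y)\simeq\Hom_{\K_\Lambda}(Y,X)$ for all $X,Y$. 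These two identities drive everything that follows.

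For (1), I would first check that $\S_\sigma$ preserves silting: combining the two identities yields $\Hom_{\K_\Lambda}(\S_\sigma T,\S_\sigma T[i])\simeq\Hom_{\K_\Lambda}(\S_\sigma T,\S_\sigma(T[-i]))\simeq\Hom_{\K_\Lambda}(T[-i],T)\simeq\Hom_{\K_\Lambda}(T,T[i])$, which vanishes for $i>0$ when $T$ is silting, while $\thick\S_\sigma T=\S_\sigma(\thick T)=\K_\Lambda$ because $\S_\sigma$ is an equivalence. The same computation shows $\S_\sigma U\geq\S_\sigma T\iff\Hom_{\K_\Lambda}(T,U[i])=0\ (i>0)\iff T\geq U$, so $\S_\sigma$ reverses the partial order. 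As its quasi-inverse is again a contravariant auto-equivalence with identical formal properties, $\S_\sigma$ is a bijection on $\silt\Lambda$, hence an anti-automorphism of the poset.

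For (2), the Hom-duality promotes to an algebra statement: because a contravariant functor reverses composition, $\S_\sigma$ restricts to an algebra isomorphism $\End_{\K_\Lambda}(T)^\op\xrightarrow{\sim}\End_{\K_\Lambda}(\S_\sigma T)$. Since the shift-reversal formula holds for every $i$ (not merely $i>0$), $\S_\sigma$ also preserves tilting objects. Invoking Rickard's theorem, I would write a derived-equivalent $\Gamma\simeq\End_{\K_\Lambda}(T)$ for a tilting $T\in\K_\Lambda$; then $\S_\sigma T$ is tilting with $\End_{\K_\Lambda}(\S_\sigma T)\simeq\Gamma^\op$, so $\Gamma^\op$ is derived equivalent to $\Lambda$, and transitivity gives $\Gamma\sim\Gamma^\op$. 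The role of $\sigma$ here is precisely to supply the isomorphism $\Lambda\simeq\Lambda^\op$, correcting the bare duality $\Gamma^\op\sim\Lambda^\op$ back to $\Lambda$.

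For (3), the key computation is on the endpoints. Since $\Hom_\Lambda(\Lambda,\Lambda)\simeq\Lambda$ as a right module and $\sigma$ carries the regular right module to the regular left module, I get $\S_\sigma(\Lambda)\simeq\Lambda$, whence $\S_\sigma(\Lambda[1])\simeq\Lambda[-1]$. Applying the order-reversing $\S_\sigma$ to $\Lambda\geq T\geq\Lambda[1]$ yields $\Lambda[-1]\geq\S_\sigma T\geq\Lambda$; since $[1]$ is an order-preserving automorphism of $\silt\Lambda$ (because $T\geq U\iff T[1]\geq U[1]$), applying it gives $\Lambda\geq S_\sigma T\geq\Lambda[1]$, so $S_\sigma=[1]\circ\S_\sigma$ maps $\twsilt\Lambda$ into itself. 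Being a composite of an order-reversing and an order-preserving bijection, $S_\sigma$ reverses order, and running the same computation for its inverse shows it restricts to a bijection of $\twsilt\Lambda$. I expect the main obstacle to be the clean bookkeeping of this last part: pinning down $\S_\sigma(\Lambda)\simeq\Lambda$ and hence the exact effect on the two endpoints, and confirming that the correction by $[1]$ returns the shifted interval precisely to the range $\Lambda\geq\,\cdot\,\geq\Lambda[1]$, so that the subset $\twsilt\Lambda$—and not merely the ambient $\silt\Lambda$—is genuinely preserved in both directions.
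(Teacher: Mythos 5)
Your proposal is correct and follows essentially the same route as the paper: the paper also composes the duality $(-)^*\colon\K_\Lambda\to\K_{\Lambda^\op}$ with the equivalence induced by $\sigma$, obtaining an order-reversing bijection on $\silt\Lambda$, and deduces (2) from $\End_{\K_\Lambda}(\S_\sigma(T))\simeq\End_{\K_{\Lambda^\op}}(T^*)\simeq\End_{\K_\Lambda}(T)^\op$ plus preservation of tilting objects. The only difference is one of detail: the paper declares these facts ``evident,'' whereas you verify them explicitly (the Hom-duality, the shift reversal $\S_\sigma(X[i])\simeq\S_\sigma(X)[-i]$, and the endpoint computation $\S_\sigma(\Lambda)\simeq\Lambda$ needed to see that $[1]\circ\S_\sigma$ really maps $\twsilt\Lambda$ onto itself), which is a faithful expansion of the paper's argument rather than a different one.
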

\begin{proof}
(1)(3) It is evident that $(-)^*$ and $\sigma$ yield an anti-isomorphism $\silt\Lambda\to\silt\Lambda^\op$ and an isomorphism $\silt\Lambda^\op\to\silt\Lambda$, respectively.
Composing them makes an anti-automorphism of $\silt\Lambda$.
This immediately implies that $S_\sigma:=[1]\circ\S_\sigma$ is also an anti-automorphism of $\twsilt\Lambda$.

(2) Clearly, $\End_{\K_\Lambda}(\S_\sigma(T))\simeq \End_{\K_{\Lambda^\op}}(T^*)\simeq\End_{\K_\Lambda}(T)^\op$.
If $\Gamma$ is derived equivalent to $\Lambda$, then there is a tilting object $T$ of $\K_\Lambda$ with $\Gamma\simeq\End_{\K_\Lambda}(T)$.
Since $\S_\sigma(T)$ is also tilting, it is seen by (1) that $\Gamma^\op\simeq\End_{\K_\Lambda}(\S_\sigma(T))$ is derived equivalent to $\Lambda$.
\end{proof}

We discuss a benefit derived from the symmetry $S_\sigma$ of $\twsilt\Lambda$.

Let $P$ be an indecomposable projective $\Lambda$-module.
We define subsets of $\twsilt\Lambda$ by
\[\def\arraystretch{1.5}
\begin{array}{rl}
\T^-_P&:=\{T\in\twsilt\Lambda\ |\ \mu_P^-(\Lambda)\geq T\geq\Lambda[1] \}\ \mbox{and }  \\
\T^+_P &:= \{T\in\twsilt\Lambda\ |\ \Lambda\geq T\geq \mu_{P[1]}^+(\Lambda[1]) \}.
\end{array}\]
Denote by $X^i$ the $i$th term of a complex $X$. We make the following observation.

\begin{lemma}\label{dju0}
We have $\T^-_P=\{T\in\twsilt\Lambda\ |\ P\in \add T^{-1} \}$ and $\T^+_P=\{T\in\twsilt\Lambda\ |\ P\in \add T^{0} \}$.
In particular, $\T_P^-\sqcup\T_{P}^+=\twsilt\Lambda$.
\end{lemma}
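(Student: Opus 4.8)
The plan is to reduce both equalities to a single $\Hom$-computation between two-term objects, and to isolate the one combinatorial input that makes everything work: in a two-term silting complex each indecomposable projective sits in exactly one cohomological degree.

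First I would record the shapes of the two boundary objects. Writing $\Lambda=P\oplus Q$ with $Q$ the sum of the remaining indecomposable projectives, and letting $f\colon P\to Q'$ (resp. $g\colon Q''\to P$) be a minimal left (resp. right) $\add Q$-approximation, the mutation construction recalled above gives $\mu_P^-(\Lambda)=(P\xrightarrow{f}Q')\oplus Q$, with $(-1)$-term $P$ and $0$-term $Q'\oplus Q$, and $\mu_{P[1]}^+(\Lambda[1])=(Q''\xrightarrow{g}P)\oplus Q[1]$, with $0$-term $P$ and $(-1)$-term $Q''\oplus Q$. Since every $T\in\twsilt\Lambda$ already satisfies $\Lambda\geq T\geq\Lambda[1]$, membership in $\T_P^-$ (resp. $\T_P^+$) is equivalent to the single relation $\mu_P^-(\Lambda)\geq T$ (resp. $T\geq\mu_{P[1]}^+(\Lambda[1])$). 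Next I would linearize these relations: for objects $X,Y$ concentrated in degrees $-1,0$ one checks $\Hom_{\K_\Lambda}(X,Y[i])=0$ for $i\geq2$, so $X\geq Y$ is equivalent to $\Hom_{\K_\Lambda}(X,Y[1])=0$, and a direct chain-map computation identifies $\Hom_{\K_\Lambda}(X,Y[1])$ with the cokernel of the map $\Hom_\Lambda(X^{-1},Y^{-1})\oplus\Hom_\Lambda(X^0,Y^0)\to\Hom_\Lambda(X^{-1},Y^0)$, $(h,h')\mapsto d_Y h+h'd_X$. Feeding in the shapes above, $\mu_P^-(\Lambda)\geq T$ becomes the statement that every map $P\to T^0$ factors as $d_T h+h'f$, and $T\geq\mu_{P[1]}^+(\Lambda[1])$ becomes the statement that every map $T^{-1}\to P$ factors as $g h+h'd_T$.

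The two equalities then follow from an approximation argument in one direction and a radical argument in the other, the crucial input being that no indecomposable projective is a summand of both $T^{-1}$ and $T^0$. Granting this: if $P\in\add T^{-1}$ then $T^0\in\add Q$, so the left-approximation property of $f$ makes every map $P\to T^0$ factor through $f$, whence $\mu_P^-(\Lambda)\geq T$ and $T\in\T_P^-$. Conversely, if $P\in\add T^0$, I would choose a split epimorphism $\pi\colon T^0\to P$ and show that the matching split monomorphism $P\to T^0$ is \emph{not} of the form $d_T h+h'f$: otherwise composing with $\pi$ would exhibit $\id_P$ as a sum of two maps factoring through the radical maps $f$ and $d_T$ (here $f$ is radical because $P\notin\add Q'$, and $d_T$ is radical because a silting complex may be taken minimal), contradicting locality of $\End_\Lambda(P)$. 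This yields $\T_P^-=\{T\mid P\in\add T^{-1}\}$, and the symmetric argument with $g$ and $T^{-1}$ in place of $f$ and $T^0$ yields $\T_P^+=\{T\mid P\in\add T^0\}$.

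It remains to justify the crucial input, which is also exactly what produces the disjoint-union statement. I would argue that each indecomposable projective occurs in precisely one of $T^{-1},T^0$. It occurs in at least one because the $K_0$-classes of the $n$ indecomposable summands of a silting object form a basis of $K_0(\K_\Lambda)$, forcing every $P_i$ to appear. It occurs in at most one by the same radical argument in disguise: were $P$ a common summand, the composite $T^{-1}\twoheadrightarrow P\hookrightarrow T^0$ would, via the $\Hom_{\K_\Lambda}(X,Y[1])$-description with $X=Y=T$, define a nonzero element of $\Hom_{\K_\Lambda}(T,T[1])=0$, since it cannot be written as $d_T h+h'd_T$ without forcing $\id_P\in\rad\End_\Lambda(P)$. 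Combining the two equalities with this partition gives $\T_P^-\sqcup\T_P^+=\twsilt\Lambda$. I expect the only delicate point to be this radical/minimality bookkeeping, namely verifying that the relevant composite endomorphism of $P$ is genuinely non-invertible; the two-term $\Hom$-reduction and the approximation step are routine.
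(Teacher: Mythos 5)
Your proof is correct, and it reaches the lemma by a genuinely more self-contained route than the paper, whose own proof is essentially a pair of citations. The paper's argument has the same skeleton as yours --- write $T=[T^{-1}\to T^{0}]$ with terms in $\add\Lambda$, establish the two equivalences $P\in\add T^{-1}\iff\mu_P^-(\Lambda)\geq T$ and $P\in\add T^{0}\iff T\geq\mu_{P[1]}^+(\Lambda[1])$, and combine them with disjointness and covering of $\add T^{-1},\add T^{0}$ --- but all three ingredients are quoted: disjointness is \cite[Lemma 2.25]{AI}, the covering $\add(T^{-1}\oplus T^{0})=\add\Lambda$ is declared ``easily seen'', and the two equivalences are read off from \cite[Theorem 2.35]{AI} and its dual. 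You instead prove everything by hand in the 2-term setting: the reduction of $\geq$ to vanishing of $\Hom_{\K_\Lambda}(-,-[1])$ together with its explicit cokernel description, the approximation argument for the ``if'' directions, and the radical/minimality obstruction for the ``only if'' directions; your disjointness argument via $\Hom_{\K_\Lambda}(T,T[1])=0$ is in effect a proof of the cited Lemma 2.25 in this case, and your covering argument invokes the $K_0$-basis property of silting summands, which is also a theorem of \cite{AI}. The trade-off is clear: the paper's proof is three lines and leans on the general mutation calculus, valid for silting objects of arbitrary length; yours is longer but elementary, and it makes visible exactly where the approximation property and where the radical obstruction enter. One bookkeeping point should be made explicit in a final write-up, and you do flag it: $T^{-1}$ and $T^{0}$ must be the terms of the \emph{minimal} representative of $T$ (radical differential), since otherwise contractible summands $(P\xrightarrow{1}P)$ would break both the statement of the lemma and your radical arguments; similarly, $f$ and $g$ are radical precisely because $\Lambda$ is basic, so that the indecomposable $P$ is not a summand of its complement $Q$.
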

\begin{proof}
Let $T\in\twsilt\Lambda$.
We know that $T$ is of the form $[T^{-1}\to T^0]$ with $T^{-1}, T^0\in\add \Lambda$.
By \cite[Lemma 2.25]{AI}, we have $\add T^{-1}\cap \add T^0=0$.
It is easily seen that $\add(T^{-1}\oplus T^0)=\add\Lambda$.
Now, we obtain from \cite[Theorem 2.35]{AI} (and its dual) that:
\begin{enumerate}[(i)]
\item $P\in \add T^{-1} \iff \mu_P^-(\Lambda)\geq T$;
\item $P\in \add T^{0} \iff T\geq \mu_{P[1]}^+(\Lambda[1])$.
\end{enumerate}
This completes the proof.
\end{proof}

The symmetry $S_\sigma$ is useful to analyze the cardinality of $\twsilt\Lambda$ as follows.

\begin{theorem}\label{double}
Let $e$ be a primitive idempotent of $\Lambda$ and put $P:=e\Lambda$.
Assume that $\Lambda$ admits an anti-automorphism $\sigma$.
If $\sigma(e)=e$,
then we have a bijection between $\T_P^-$ and $\T_P^+$, i.e., $\T_P^-\stackrel{{\rm anti}}{\simeq} \T_P^+$.
In particular, $|\twsilt\Lambda|=2\cdot|\T_P^-|=2\cdot|\T_P^+|$.
\end{theorem}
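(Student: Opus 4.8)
The plan is to exploit the anti-automorphism $S_\sigma$ of the poset $\twsilt\Lambda$ supplied by Theorem \ref{symmetry}(3), and to show that it interchanges the two pieces $\T_P^-$ and $\T_P^+$ of the partition $\T_P^-\sqcup\T_P^+=\twsilt\Lambda$ established in Lemma \ref{dju0}. Since $S_\sigma$ is in particular a bijection on $\twsilt\Lambda$, once I know $S_\sigma(\T_P^+)=\T_P^-$ it follows that $S_\sigma$ restricts to an order-reversing bijection $\T_P^+\to\T_P^-$, whence $|\T_P^-|=|\T_P^+|$; combining this with $\twsilt\Lambda=\T_P^-\sqcup\T_P^+$ yields $|\twsilt\Lambda|=2|\T_P^-|=2|\T_P^+|$, and the order-reversal gives the asserted anti-isomorphism $\T_P^-\stackrel{\rm anti}{\simeq}\T_P^+$.

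First I would compute $S_\sigma(T)$ termwise for $T=[T^{-1}\to T^0]\in\twsilt\Lambda$. The dual $(-)^*$ turns the two-term complex $T$ into $T^*=[(T^0)^*\to (T^{-1})^*]$ placed in degrees $0$ and $1$; applying the degree-preserving equivalence $\sigma$ and then the shift $[1]$ moves everything back into degrees $-1$ and $0$, so that $S_\sigma(T)=[\,\sigma((T^0)^*)\to\sigma((T^{-1})^*)\,]$. In other words, writing $F:=\sigma\circ(-)^*$ for the induced duality on $\proj\Lambda$, I obtain $S_\sigma(T)^{-1}=F(T^0)$ and $S_\sigma(T)^0=F(T^{-1})$. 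The essential structural point is that the shift $[1]$ built into $S_\sigma$ is exactly what swaps the roles of the degree $-1$ and degree $0$ terms.

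The decisive step is to pin down the effect of $F$ on the indecomposable projective $P=e\Lambda$. Since $(e\Lambda)^*\cong\Lambda e$ and the algebra isomorphism $\sigma\colon\Lambda^\op\to\Lambda$ carries the idempotent $e$ to $\sigma(e)$, the duality $F$ permutes the indecomposable projectives according to $e_i\mapsto\sigma(e_i)$; the hypothesis $\sigma(e)=e$ therefore yields $F(P)\cong P$. This is precisely where $\sigma(e)=e$ is used, and I expect it to be the only genuinely delicate point, since it requires tracking the idempotent faithfully through both the $\Lambda$-dual and the transport along $\sigma$. Because $F$ is an equivalence it preserves $\add$, so $P\in\add T^0$ if and only if $F(P)\in\add F(T^0)$, i.e.\ if and only if $P\in\add S_\sigma(T)^{-1}$. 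By Lemma \ref{dju0} this reads $T\in\T_P^+\iff S_\sigma(T)\in\T_P^-$, which is exactly the identity $S_\sigma(\T_P^+)=\T_P^-$ required above, completing the argument.
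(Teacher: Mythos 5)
Your proof is correct and follows essentially the same route as the paper: the paper's own argument also applies $S_\sigma$ from Theorem \ref{symmetry}(3), observes that it carries $\T_P^-$ bijectively onto $\T_{\S_\sigma(P)}^+$ with $\S_\sigma(P)\simeq P$ forced by $\sigma(e)=e$, and concludes via the partition in Lemma \ref{dju0}. The only difference is that you spell out the termwise computation of $S_\sigma(T)$ and the identification $(e\Lambda)^*\simeq\Lambda e$, which the paper leaves implicit.
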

\begin{proof}
We see that $S_\sigma$ gives a one-to-one correspondence between $\T_P^-$ and $\T_{\S_\sigma(P)}^+$.
As $\S_\sigma(P)\simeq P$ by assumption,
the assertion follows from Lemma \ref{dju0}.
\end{proof}

Let $T:=[T_1\to T_0]$ be a 2-term silting object of $\K_\Lambda$; i.e., $T\in\twsilt\Lambda$, and $\E$ denote a complete list of pairwise orthogonal primitive idempotents of $\Lambda$.
Recall that the \emph{$g$-vector} $g_T$ of $T$ is the vector $(g_e)_{e\in\E}$ which is given by $g_e:=c_0^e-c_1^e$.
Here, $c_i^e$ stands for the multiplicity of $e\Lambda$ in $T_i$.

We immediately obtain the following corollary.

\begin{corollary}
Suppose that $\Lambda$ admits an anti-automorphism $\sigma$ satisfying $\sigma(e)=e$ for every primitive idempotent $e$ of $\Lambda$.
Then $S_\sigma$ reverses the directions of the $g$-vectors of all 2-term silting objects in $\K_\Lambda$.
\end{corollary}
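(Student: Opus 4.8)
The plan is to compute directly how the functor $S_\sigma=[1]\circ\S_\sigma$ transforms the two terms of a 2-term silting complex, and then to read off the effect on $g$-vectors. Throughout I may assume, by Theorem \ref{symmetry}(3), that $S_\sigma(T)\in\twsilt\Lambda$ for every $T\in\twsilt\Lambda$, so that the $g$-vector of $S_\sigma(T)$ is well defined.

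First I would unwind $S_\sigma(T)$ at the level of complexes. Writing $T=[T_1\to T_0]$ with $T_1$ in degree $-1$ and $T_0$ in degree $0$, the $\Lambda$-dual $(-)^*=\Hom_{\K_\Lambda}(-,\Lambda)$ is contravariant, so it sends $T$ to the complex $T^*=[(T_0)^*\to(T_1)^*]$ in $\K_{\Lambda^\op}$ with $(T_0)^*$ in degree $0$ and $(T_1)^*$ in degree $1$. Applying the degree-preserving equivalence $\sigma:\K_{\Lambda^\op}\to\K_\Lambda$ and then the shift $[1]$ returns us to degrees $-1,0$, giving
\[ S_\sigma(T)=[\,\sigma((T_0)^*)\to\sigma((T_1)^*)\,], \]
with $\sigma((T_0)^*)$ in degree $-1$ and $\sigma((T_1)^*)$ in degree $0$. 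Thus $S_\sigma$ interchanges the two terms of $T$, up to applying $\sigma\circ(-)^*$.

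The crux is to show that $\sigma\circ(-)^*$ preserves the multiplicity of each indecomposable projective. For a primitive idempotent $e$, the duality sends the indecomposable projective $e\Lambda$ to the indecomposable projective $\Lambda^\op$-module labelled by the same idempotent $e$, and the algebra isomorphism $\sigma$ then sends this to $\sigma(e)\Lambda$. By hypothesis $\sigma(e)=e$, so $\sigma((e\Lambda)^*)\simeq e\Lambda$. Consequently, if $c_i^e$ denotes the multiplicity of $e\Lambda$ in $T_i$, then the multiplicity of $e\Lambda$ in $\sigma((T_0)^*)$ is again $c_0^e$ and in $\sigma((T_1)^*)$ is again $c_1^e$. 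Combining with the previous step, in $S_\sigma(T)$ the degree-$0$ term $\sigma((T_1)^*)$ contributes $c_1^e$ and the degree-$(-1)$ term $\sigma((T_0)^*)$ contributes $c_0^e$, so the $g$-vector entry of $S_\sigma(T)$ at $e$ equals $c_1^e-c_0^e=-(c_0^e-c_1^e)=-g_e$. Hence $g_{S_\sigma(T)}=-g_T$, which is exactly the assertion that $S_\sigma$ reverses the directions of all $g$-vectors.

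I expect the main obstacle to be the bookkeeping in the first two steps: correctly tracking the degree conventions through the contravariant duality and the shift, and verifying that $\sigma\circ(-)^*$ fixes the idempotent label of each indecomposable projective. Once the identity $\sigma((e\Lambda)^*)\simeq e\Lambda$ is established under the hypothesis $\sigma(e)=e$, the $g$-vector computation is immediate.
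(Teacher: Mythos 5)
Your proof is correct and follows essentially the same route as the paper: the paper writes $T=[e_1\Lambda\to e_0\Lambda]$ and observes that, since all idempotents are fixed by $\sigma$, the functor $S_\sigma$ sends $T$ to $[e_0\Lambda\to e_1\Lambda]$, which swaps the roles of the two terms and hence negates the $g$-vector. Your version just spells out the bookkeeping (contravariance of $(-)^*$, the shift $[1]$, and the identification $\sigma((e\Lambda)^*)\simeq\sigma(e)\Lambda=e\Lambda$) that the paper leaves implicit.
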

\begin{proof}
Let $T:=[e_1\Lambda\to e_0\Lambda]$ be a 2-term silting object of $\K_\Lambda$, where $e_0$ and $e_1$ are idempotents of $\Lambda$.
Since any idempotent is fixed by $\sigma$, we observe that $S_\sigma$ sends $T$ to the 2-term silting object $[e_0\Lambda \to e_1\Lambda]$, which immediately tells us the fact that $g_{S_\sigma(T)}=-g_T$.
\end{proof}

\section{Applications and examples}

We explore when $\Lambda$ admits an anti-automorphism $\sigma$ with $\sigma(e)=e$ for some primitive idempotent $e$ of $\Lambda$, and give applications and examples of Theorem \ref{double}.

Let us start with enveloping algebras.

\begin{theorem}\label{envelopingalgebra}
The enveloping algebra $\Lambda^\op\otimes_K\Lambda$ has an anti-automorphism $(a\otimes b\mapsto b\otimes a)$ fixing the primitive idempotent $e\otimes e$ for a primitive idempotent $e$ of $\Lambda$.
In particular, there is a bijection between $\T_P^-$ and $\T_P^+$,
where $P:=(e\otimes e)\Lambda^\op\otimes_K\Lambda$.
\end{theorem}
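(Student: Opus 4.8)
The plan is to verify the three hypotheses of Theorem \ref{double} for the algebra $\Gamma:=\Lambda^\op\otimes_K\Lambda$: that the swap map $\varsigma(a\otimes b):=b\otimes a$ is a $K$-linear anti-automorphism of $\Gamma$, that $\varsigma$ fixes $e\otimes e$, and that $e\otimes e$ is a \emph{primitive} idempotent of $\Gamma$. Once these are in place, taking $\sigma:=\varsigma$ and applying Theorem \ref{double} to $\Gamma$ with the fixed primitive idempotent $e\otimes e$ yields at once the desired bijection between $\T_P^-$ and $\T_P^+$ for $P=(e\otimes e)\Gamma$.

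First I would record the multiplication on $\Gamma$. Writing the product of $\Lambda$ by juxtaposition and recalling that the first tensor factor multiplies in $\Lambda^\op$, one has $(a\otimes b)(a'\otimes b')=(a'a)\otimes(bb')$ for $a,a'\in\Lambda^\op$ and $b,b'\in\Lambda$. The map $\varsigma$, defined on simple tensors and extended $K$-linearly, is a well-defined $K$-linear involution. A direct check then gives $\varsigma\big((a\otimes b)(a'\otimes b')\big)=\varsigma\big((a'a)\otimes(bb')\big)=(bb')\otimes(a'a)$, whereas $\varsigma(a'\otimes b')\,\varsigma(a\otimes b)=(b'\otimes a')(b\otimes a)=(bb')\otimes(a'a)$, and the two coincide. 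Hence $\varsigma$ is an anti-automorphism, and plainly $\varsigma(e\otimes e)=e\otimes e$. That $e\otimes e$ is idempotent is immediate from $(ee)\otimes(ee)=e\otimes e$.

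The one point deserving care — and what I expect to be the \emph{main obstacle} — is the primitivity of $e\otimes e$ in $\Gamma$, which Theorem \ref{double} genuinely requires. Here I would pass to the corner algebra: a short computation shows $(e\otimes e)\Gamma(e\otimes e)\simeq(e\Lambda^\op e)\otimes_K(e\Lambda e)\simeq(e\Lambda e)^\op\otimes_K(e\Lambda e)$, so it suffices to prove this tensor product is local. Since $K$ is algebraically closed and $e$ is primitive, $e\Lambda e$ is a finite-dimensional local $K$-algebra with residue field $e\Lambda e/\rad\simeq K$, and the same holds for its opposite. For finite-dimensional algebras $A,B$ over a perfect (in particular algebraically closed) field one has $\rad(A\otimes_K B)=\rad A\otimes_K B+A\otimes_K\rad B$, so the quotient of the corner algebra by its radical is $K\otimes_K K\simeq K$; thus it is local and $e\otimes e$ is primitive.

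With all three hypotheses verified, Theorem \ref{double} applied to $\Gamma=\Lambda^\op\otimes_K\Lambda$, the anti-automorphism $\sigma=\varsigma$, and the primitive idempotent $e\otimes e$ fixed by $\sigma$ produces the bijection $\T_P^-\stackrel{\rm anti}{\simeq}\T_P^+$ with $P=(e\otimes e)\,\Lambda^\op\otimes_K\Lambda$, which is the assertion. The computations in the first two steps are routine; essentially all of the content lies in establishing that $e\otimes e$ remains primitive after tensoring, for which the algebraic closedness of $K$ is indispensable.
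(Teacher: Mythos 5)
Your proposal is correct and takes the route the paper intends: the paper states this theorem without any written proof, treating it as an immediate application of Theorem \ref{double}, and your argument supplies exactly the routine verifications (the swap map is an anti-automorphism, it fixes $e\otimes e$, and $e\otimes e$ is primitive) that the paper leaves implicit. Your primitivity check — passing to the corner algebra $(e\otimes e)\Gamma(e\otimes e)\simeq(e\Lambda e)^\op\otimes_K e\Lambda e$ and using $\rad(A\otimes_K B)=\rad A\otimes_K B+A\otimes_K\rad B$ over the algebraically closed field $K$ to conclude it is local — is sound and is indeed the one point where the hypothesis on $K$ is genuinely used.
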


Let $Q:=(Q_0, Q_1)$ be a (finite) quiver, where $Q_0$ and $Q_1$ are the sets of vertices and arrows, respectively.
For a vertex $v$ of $Q$, we denote by $e_v$ the primitive idempotent of $KQ$ corresponding to $v$.
The opposite quiver of $Q$ is denoted by $Q^\op$; that is, it consists of the same vertices as $Q$ and reversed arrows $a^*$ for arrows $a$ of $Q$, i.e., $a^*$ is obtained by swapping the source and target of $a$.
For an admissible ideal $I$ of $KQ$, reversing arrows makes the admissible ideal $I^\op$ of $KQ^\op$; for example, $ab\in I$ implies $b^*a^*\in I^\op$.

We consider the case that an isomorphism $\iota: Q^\op\to Q$ of quivers exists; $\iota$ gives rise to an algebra isomorphism $KQ^\op\to KQ$, which will be also written by $\iota$.

\begin{proposition}\label{qi}
Let $\Lambda$ be an algebra presented by a quiver $Q$ and an admissible ideal $I$ of $KQ$.
Suppose that there is an isomorphism $\iota: Q^\op\to Q$ of quivers satisfying $I^\op=\iota^{-1}(I)$ and fixing a vertex $v$; put $P:=e_v\Lambda$.
Then we have a bijection between $\T_P^-$ and $\T_P^+$.
In particular, $|\twsilt\Lambda|=2\cdot|\T_P^-|$.
\end{proposition}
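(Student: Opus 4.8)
The plan is to reduce everything to Theorem \ref{double}: it suffices to manufacture from the quiver isomorphism $\iota$ an anti-automorphism $\sigma$ of $\Lambda$ fixing the primitive idempotent $e_v$, and then invoke Theorem \ref{double} with $e:=e_v$ and $P:=e_v\Lambda$. So the real content is the construction of $\sigma$ and the verification $\sigma(e_v)=e_v$; the bijection and the formula $|\twsilt\Lambda|=2\cdot|\T_P^-|$ come for free.

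First I would set up the standard identification $\Lambda^\op\simeq KQ^\op/I^\op$. Reversing paths gives an algebra isomorphism $(KQ)^\op\simeq KQ^\op$ which sends each vertex idempotent $e_w$ to $e_w$ and a composite $a_n\cdots a_1$ to $a_1^*\cdots a_n^*$; by construction this carries $I$ onto $I^\op$ (exactly as $I^\op$ was defined, e.g. $ab\in I$ forces $b^*a^*\in I^\op$). Passing to quotients yields $\Lambda^\op=(KQ/I)^\op\simeq KQ^\op/I^\op$, under which the primitive idempotent of $\Lambda^\op$ at a vertex $w$ is again $e_w$.

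Next, the quiver isomorphism $\iota\colon Q^\op\to Q$ induces an algebra isomorphism $\iota\colon KQ^\op\to KQ$, and the hypothesis $I^\op=\iota^{-1}(I)$ says precisely that $\iota(I^\op)=I$. Hence $\iota$ descends to an algebra isomorphism $KQ^\op/I^\op\to KQ/I$, and composing with the identification of the previous step gives an algebra isomorphism $\sigma\colon\Lambda^\op\to\Lambda$, i.e. an anti-automorphism of $\Lambda$. Because $\iota$ fixes the vertex $v$, it sends the vertex idempotent $e_v$ of $KQ^\op$ to that of $KQ$; tracing through the identifications shows $\sigma$ sends the primitive idempotent $e_v\in\Lambda^\op$ to $e_v\in\Lambda$, that is $\sigma(e_v)=e_v$. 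With $e:=e_v$ and $P:=e_v\Lambda$ we are then exactly in the setting of Theorem \ref{double}, which immediately delivers the bijection between $\T_P^-$ and $\T_P^+$ and the equality $|\twsilt\Lambda|=2\cdot|\T_P^-|$.

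I expect the only genuine (though routine) difficulty to be the bookkeeping in the first two steps: correctly identifying $\Lambda^\op$ with $KQ^\op/I^\op$, checking that the ideal condition $I^\op=\iota^{-1}(I)$ is exactly what is needed for $\iota$ to descend to the quotient, and tracking the vertex idempotents through the path-reversal isomorphism so that fixing the vertex $v$ really translates into $\sigma(e_v)=e_v$. Once $\sigma$ is built and shown to fix $e_v$, no further argument is required beyond citing Theorem \ref{double}.
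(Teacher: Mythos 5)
Your proposal is correct and is essentially the paper's own proof: both construct $\sigma$ as the composite $\Lambda^\op=(KQ/I)^\op\simeq KQ^\op/I^\op\stackrel{\iota}{\simeq}KQ/I=\Lambda$ (using $I^\op=\iota^{-1}(I)$ for $\iota$ to descend), observe that $\iota(v)=v$ gives $\sigma(e_v)=e_v$, and then cite Theorem \ref{double}. Your write-up merely makes the bookkeeping of the identifications more explicit than the paper does.
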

\begin{proof}
As $I^\op=\iota^{-1}(I)$, we get isomorphisms
\[\Lambda^\op=(KQ/I)^\op\simeq KQ^\op/I^\op\stackrel{\iota}{\simeq}KQ/I=\Lambda;\]
write the composition by $\sigma:\Lambda^\op\to\Lambda$.
Since $\iota(v)=v$ by assumption, we have $\sigma(e_v)=e_v$.
Thus, the assertion follows from Theorem \ref{double}.
\end{proof}

\begin{example}
Let $\Lambda$ be the algebra given by the $A_n$-quiver $Q:1\xrightarrow{x} 2\xrightarrow{x}\cdots\xrightarrow{x} n$ and the admissible ideal $I=0$ or $I:=\langle x^r\rangle$ for some $r>0$.
We have an isomorphism $Q^\op\to Q$ of quivers which assigns $i\mapsto n-i+1\ (i\in Q_0)$ and $x^*\mapsto x\ (x\in Q_1)$.
The equalities $I^\op=\langle (x^*)^r\rangle=\iota^{-1}(I)$ imply that $\Lambda$ admits an anti-automorphism $\sigma$.
If $n$ is even, then we apply Theorem \ref{symmetry}.
If $n$ is odd, then the vertex $v:=\frac{n+1}{2}$ is fixed by $\sigma$, whence we can apply Proposition \ref{qi}; we get $\T_{e_v\Lambda}^-\stackrel{{\rm anti}}{\simeq} \T_{e_v\Lambda}^+$.
The following are the Hasse quivers of $\twsilt\Lambda$ for $n=2$ and $n=3$, in which $\xymatrix{*+[F]{-}}$ and $\xymatrix{*+[o][F]{-}}$ correspond and $\bullet$ is stable by $S_\sigma$.
\[\begin{array}{c@{\hspace{1cm}}c}
n=2 & n=3 \\
\\
\xymatrix@C=1.5cm{
 & *++[F]{1} \ar[ld]\ar[rd] & \\
*++[F]{2} \ar[d] &           & \bullet \ar[ddl] \\
*++[o][F]{2} \ar[dr] &           & \\
          & *++[o][F]{1}           &
} &
\xymatrix@C=1.5cm{
&&*++[F]{1}\ar[dll]\ar[d]\ar[drr]&& \\
*++[F]{3}\ar[d]\ar[drrr]|\hole&&*++[o][F]{2}\ar[dl]\ar[ddr]|(0.4)\hole&&*++[F]{4}\ar[dl]\ar[ddd] \\
*++[F]{5}\ar[r]\ar[rd]&*++[o][F]{6}\ar[dl]|\hole&&*++[F]{7}\ar[ddl]|(0.25)\hole& \\
*++[o][F]{5}\ar[d]&*++[F]{6}\ar[dr]|\hole\ar[l]&&*++[o][F]{7}\ar[dr]\ar[dlll]|(0.2)\hole& \\
*++[o][F]{3}\ar[drr]&&*++[F]{2}\ar[d]&&*++[o][F]{4} \ar[dll] \\
&&*++[o][F]{1}&&
}
\end{array}\]
Here, in the RHS, $\xymatrix{*+[F]{-}}$ and $\xymatrix{*+[o][F]{-}}$ are the members of $\T_{P_2}^+$ and $\T_{P_2}^-$, respectively.
\end{example}

\subsection{Algebras presented by double quivers}\label{subsec:dq}

Recall that the \emph{double} quiver $\overline{Q}$ of $Q$ is the quiver constructed by $\overline{Q}_0:=Q_0$ and $\overline{Q}_1:=Q_1\sqcup\{a^*\ |\ a\in Q_1 \}$, where $a^*$ is obtained by swapping the source and target of $a$.
Clearly, the assignments $v\mapsto v\ (v\in Q_0)$, $a^*\mapsto a^*$ and $(a^*)^*\mapsto a\ (a\in Q_1)$ make an isomorphism $\iota: \overline{Q}^\op\to\overline{Q}$ of quivers; note that $\iota$ fixes all vertices.

Let us give examples of algebras presented by a double quiver.

\begin{example}
\begin{enumerate}
\item \cite{GP} The \emph{preprojective algebra} $\Pi_Q$ of a Dynkin quiver $Q$ is defined as the quotient $K\overline{Q}/\overline{I}$ of $K\overline{Q}$ by $\overline{I}:=\langle aa^*-a^*a\ |\ a\in Q_1 \rangle$.
Then, it is finite dimensional and selfinjective.
\item \cite[Example 1.6]{X}
Let $Q$ be a quiver and $I$ an admissible ideal of $KQ$.
For a path $p=a_1a_2\cdots a_\ell$ in $Q$, write $p^*:=a_\ell^*\cdots a_2^*a_1^*$; extending it linearly, we also use the terminology $p^*$ for a linear combination $p$ in $KQ$.
We define an ideal $\overline{I}$ of $K\overline{Q}$ which is generated by $p, p^*\ (p\in I)$ and $ab^*\ (a,b\in Q_1)$.
Then the algebra $\Lambda(Q, I):=K\overline{Q}/\overline{I}$ is finite dimensional.
If $Q$ contains no oriented cycle, then $\Lambda(Q,I)$ is a quasi-hereditary algebra with a duality.
\end{enumerate}
\end{example}

Now, an application of Proposition \ref{qi} is obtained.

\begin{theorem}\label{ppas}
Let $\Lambda=\Pi_Q$ for a Dynkin quiver $Q$ or $\Lambda(Q, I)$ for a quiver $Q$ and an admissible ideal $I$ of $KQ$.
Then we have a bijection between $\T_P^-$ and $\T_P^+$ for any indecomposable projective module $P$ of $\Lambda$.
In particular, $|\twsilt\Lambda|=2\cdot|\T_P^-|$.
\end{theorem}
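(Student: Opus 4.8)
The plan is to reduce both cases to Proposition~\ref{qi} using the isomorphism $\iota\colon\overline{Q}^\op\to\overline{Q}$ recorded above, which fixes every vertex. Both $\Pi_Q$ and $\Lambda(Q,I)$ are presented as $K\overline{Q}/\overline{I}$ for a suitable admissible ideal $\overline{I}$, so the whole statement comes down to verifying the compatibility condition $\overline{I}^\op=\iota^{-1}(\overline{I})$. Once this is checked, Proposition~\ref{qi} applies to $P=e_v\Lambda$ for every vertex $v$ at once (since $\iota$ fixes all of them), giving $\T_P^-\stackrel{{\rm anti}}{\simeq}\T_P^+$ for every indecomposable projective $P$, and hence $|\twsilt\Lambda|=2\cdot|\T_P^-|$.

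First I would make the condition concrete by unwinding $\iota$ into the induced anti-automorphism $\varsigma$ of $K\overline{Q}$. Tracing $\iota$ through the op-isomorphism $(K\overline{Q})^\op\simeq K\overline{Q}^\op$ shows that $\varsigma$ fixes every vertex, interchanges $a\leftrightarrow a^*$ on arrows, and reverses products, so that $\varsigma(a_1\cdots a_\ell)=\varsigma(a_\ell)\cdots\varsigma(a_1)$. The condition $\overline{I}^\op=\iota^{-1}(\overline{I})$ is then equivalent to $\varsigma(\overline{I})=\overline{I}$, which I would verify on generators.

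For $\Lambda=\Pi_Q$, the ideal $\overline{I}$ is generated by the commutators $aa^*-a^*a$, and a direct computation gives $\varsigma(aa^*)=\varsigma(a^*)\varsigma(a)=aa^*$ and $\varsigma(a^*a)=\varsigma(a)\varsigma(a^*)=a^*a$; thus each generator is fixed and $\varsigma(\overline{I})=\overline{I}$. For $\Lambda=\Lambda(Q,I)$, the ideal $\overline{I}$ is generated by $p,p^*$ (for $p\in I$) and $ab^*$ (for $a,b\in Q_1$); here the product-reversing property yields $\varsigma(p)=p^*$ and $\varsigma(p^*)=p$, while $\varsigma(ab^*)=\varsigma(b^*)\varsigma(a)=ba^*$ is again a generator of the same shape. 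Hence $\varsigma$ permutes the generating set and $\varsigma(\overline{I})=\overline{I}$ in both cases.

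With the compatibility established, the conclusion is immediate from Proposition~\ref{qi}. I expect the only delicate point to be the bookkeeping with the direction conventions of the double quiver---confirming that $\iota^{-1}$ really matches $(-)^\op$ through the involution $a\leftrightarrow a^*$, and that the preprojective relations are genuinely fixed rather than merely carried into $\overline{I}$. Everything after the generator-level check---the passage from a fixed vertex to a fixed idempotent and then to the bijection---is supplied verbatim by Proposition~\ref{qi}, so no further work is needed.
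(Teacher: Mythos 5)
Your proposal is correct and follows exactly the paper's route: the paper's entire proof is the one-line remark that $\overline{I}^\op=\iota^{-1}(\overline{I})$ ``can easily be checked,'' followed by an appeal to Proposition~\ref{qi}. You have simply carried out that check explicitly (correctly translating the condition into $\varsigma(\overline{I})=\overline{I}$ for the induced anti-automorphism $\varsigma$ and verifying it on generators in both cases), so your write-up is the same argument with the omitted verification filled in.
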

\begin{proof}
We can easily check the equality $\overline{I}^\op=\iota^{-1}(\overline{I})$ holds, and apply Proposition \ref{qi}.
\end{proof}

\subsection{Cellular algebras}

Cellular algebras were introduced by Graham and Lehrer \cite{GL-cellular}.
An algebra $\Lambda$ is called \emph{cellular} if it admits a cellular basis; that is, a basis with certain nice multiplicative properties.
We refer to \cite{KX-cellular} for more details.
By the definition, each cellular basis of $\Lambda$ admits an involution $\sigma$; i.e., an anti-automorphism $\sigma$ of $\Lambda$ with $\sigma^2=1$.
It is shown in \cite[Proposition 5.1]{KX-cellular} that the involution $\sigma$ fixes all simples of a cellular algebra.
Hence, we have the following result.

\begin{theorem}\label{cellular-alg}
Let $\Lambda$ be a cellular algebra.
Then there exists a bijection between $\T_P^-$ and $\T_P^+$ for any indecomposable projective module $P$ of $\Lambda$.
In particular, $|\twsilt\Lambda|=2\cdot |\T_P^-|$.
\end{theorem}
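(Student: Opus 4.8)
The plan is to reduce the statement to Theorem \ref{double}. For the given indecomposable projective $P=e\Lambda$, with $e$ a primitive idempotent of $\Lambda$, it is enough to exhibit an anti-automorphism of $\Lambda$ fixing $e$; Theorem \ref{double} then yields $\T_P^-\stackrel{\mathrm{anti}}{\simeq}\T_P^+$ and the count $|\twsilt\Lambda|=2\cdot|\T_P^-|$.

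First I would bring in the cellular structure. By definition a cellular algebra carries an involution $\sigma$, that is, an anti-automorphism with $\sigma^2=1$, and by \cite[Proposition 5.1]{KX-cellular} this $\sigma$ fixes every simple module. The gap to close is that Theorem \ref{double} requires the idempotent-level condition $\sigma(e)=e$ rather than the weaker statement that $\sigma$ fixes the associated simple. To bridge it, note that $\sigma$, being bijective and product-reversing, sends the primitive idempotent $e$ to a primitive idempotent $\sigma(e)$; and since $\sigma$ fixes the simple top of $P$, the idempotents $e$ and $\sigma(e)$ afford isomorphic simple modules, hence isomorphic indecomposable projectives $e\Lambda\simeq\sigma(e)\Lambda$. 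As $\Lambda$ is a finite-dimensional, hence semiperfect, algebra, isomorphic indecomposable projectives arise from conjugate primitive idempotents, so there is a unit $u\in\Lambda$ with $\sigma(e)=ueu^{-1}$.

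Next I would correct $\sigma$ by the inner automorphism determined by $u$. Put $\tau(x):=u^{-1}\sigma(x)u$. Conjugation by $u^{-1}$ is an algebra automorphism, so its composite with the anti-automorphism $\sigma$ is again an anti-automorphism; indeed $\tau(xy)=u^{-1}\sigma(y)\sigma(x)u=\tau(y)\tau(x)$. By construction $\tau(e)=u^{-1}\sigma(e)u=u^{-1}(ueu^{-1})u=e$, so $\tau$ is an anti-automorphism of $\Lambda$ fixing $e$. Applying Theorem \ref{double} to $\tau$ finishes the proof. Since this adjustment is performed separately for each indecomposable projective $P$, it is harmless that a single inner correction cannot in general fix all primitive idempotents simultaneously.

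The one genuinely non-formal point is the translation between the module-theoretic input ``$\sigma$ fixes all simples'' from \cite{KX-cellular} and the idempotent condition ``$\sigma(e)=e$'' demanded by Theorem \ref{double}; the conjugating unit $u$, and hence the corrected anti-automorphism $\tau$, is what effects this translation. The step deserving care is matching the simple fixed by $\sigma$ with the idempotent $e$ under the duality that the anti-automorphism induces on modules, so as to be sure that $e$ and $\sigma(e)$ really do correspond to the same simple.
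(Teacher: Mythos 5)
Your proof is correct and takes essentially the same route as the paper: the cellular involution $\sigma$, the fact from \cite[Proposition 5.1]{KX-cellular} that $\sigma$ fixes all simples, and then Theorem \ref{double}. You are in fact more careful than the paper, whose ``proof'' is only the paragraph preceding the statement: your replacement of $\sigma$ by $\tau(x)=u^{-1}\sigma(x)u$ (legitimate, since over a finite-dimensional algebra isomorphic indecomposable projectives arise from conjugate primitive idempotents) is exactly the missing bridge from ``$\sigma$ fixes all simples'' to the hypothesis $\sigma(e)=e$ that Theorem \ref{double} literally requires --- the alternative, equally valid, reading being that the proof of Theorem \ref{double} only ever uses $\S_\sigma(P)\simeq P$, which fixing the simple top of $P$ already guarantees.
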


Nowadays, a lot of interesting algebras have been found to be cellular,
for example, Ariki--Koike algebras, ($q$-)Schur algebras as well as various generalizations, block algebras of category $\mathcal{O}$, and various diagram algebras.
We hope that Theorem \ref{cellular-alg} will be useful to verify the finiteness of $|\twsilt\Lambda|$ for the aforementioned algebras,
especially, for Hecke algebras \cite{AS-Hecke}, Schur algebras \cite{W-schur}, etc.

\subsection{Symmetric algebras with radical cube zero}
We get the following result.

\begin{theorem}\label{rcz}
Let $\Lambda$ be a symmetric algebra with radical cube zero.
Then there exists a bijection between $\T_P^-$ and $\T_P^+$ for any indecomposable projective module $P$ of $\Lambda$.
In particular, $|\twsilt\Lambda|=2\cdot |\T_P^-|$.
\end{theorem}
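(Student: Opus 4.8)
The plan is to produce an anti-automorphism $\varsigma$ of $\Lambda$ that fixes every primitive idempotent, and then to invoke Theorem \ref{double} once for each indecomposable projective $P=e\Lambda$. Since $\twsilt\Lambda$ depends only on the Morita equivalence class of $\Lambda$, I would first reduce to the case that $\Lambda$ is basic and present it as $\Lambda=KQ/I$ for its Gabriel quiver $Q$ and an admissible ideal $I$. Because $\rad^3\Lambda=0$ (equivalently $\rad^2\Lambda\cdot\rad\Lambda=0$, so $\rad^2\Lambda\subseteq\soc\Lambda$), every path of length $\geq 3$ lies in $I$, and hence $I$ is generated by a space $R$ of degree-two relations together with all length-three paths. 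In particular, all multiplicative information is concentrated in degrees $\leq 2$.

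Next I would extract the structure forced by the symmetric hypothesis. For a symmetric algebra one has $\soc(e_i\Lambda)\cong S_i$ for each vertex $i$, and the Cartan matrix is symmetric; consequently the number of arrows $i\to j$ equals the number of arrows $j\to i$ for all $i,j$. Moreover, since $\soc(e_i\Lambda)$ sits inside $e_i\Lambda e_i$, the subspace $\rad^2\Lambda\subseteq\soc\Lambda$ is concentrated on the diagonal, so a length-two composite $i\to k\to j$ survives in $\Lambda$ only when $j=i$; thus the nonzero products are exactly the ``out-and-back'' compositions. Writing $A_{ij}$ for the span of the arrows $i\to j$ and $\lambda$ for a symmetrizing form of $\Lambda$, the assignment $(a,b)\mapsto\lambda(ab)$ defines a pairing $A_{ij}\times A_{ji}\to K$ which is nondegenerate (by nondegeneracy of $\lambda$ together with $\soc(e_i\Lambda)\cong S_i$) and, crucially, symmetric in the sense that $\lambda(ab)=\lambda(ba)$.

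Using nondegeneracy of this pairing I would fix, for each ordered pair $(i,j)$, mutually dual bases of $A_{ij}$ and $A_{ji}$, and define $\varsigma$ on arrows so as to realize the duality $A_{ij}\simeq(A_{ji})^*\simeq A_{ji}$, setting $\varsigma(e_i)=e_i$ on vertices; one then extends $\varsigma$ to $KQ$ by reversing paths, $\varsigma(a_1\cdots a_\ell)=\varsigma(a_\ell)\cdots\varsigma(a_1)$. It remains to check that $\varsigma$ descends to $\Lambda$, i.e.\ that $\varsigma(R)\subseteq I$. This is the main obstacle: one must show that the degree-two relation space $R$ — equivalently the kernel of the length-two multiplication map into $\soc\Lambda$ — is stable under reversing arrows. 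Here the symmetry $\lambda(ab)=\lambda(ba)$ does the work, identifying the pairing that governs $R$ on $A_{ij}\times A_{ji}$ with the one on $A_{ji}\times A_{ij}$, so that the relations are matched under $\varsigma$ once the dual bases are chosen compatibly.

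Once $\varsigma$ is known to be a well-defined anti-automorphism of $\Lambda$ with $\varsigma(e)=e$ for every primitive idempotent $e$, Theorem \ref{double} applies to $P=e\Lambda$ for an arbitrary indecomposable projective, yielding the bijection $\T_P^-\stackrel{\rm anti}{\simeq}\T_P^+$ and hence $|\twsilt\Lambda|=2\cdot|\T_P^-|$. I expect the only real difficulty to lie in the previous step — organizing the dual-basis choices across all arrow spaces so that they assemble into one global anti-automorphism genuinely preserving $I$; the degenerate projectives with $\rad^2(e_i\Lambda)=0$ (uniserial loops) should be handled separately but pose no essential problem.
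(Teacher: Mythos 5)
Your proposal is correct, and strategically it is the same as the paper's: construct an anti-automorphism of the presented algebra fixing every vertex, then invoke Theorem \ref{double} (the paper routes this through Proposition \ref{qi}). The difference lies in where the structural input comes from. The paper simply cites \cite[Proposition 3.3]{AA} to get that the Gabriel quiver is a double quiver with loops added, that $ab=0$ unless $b=a^*$, and that after a change of basis $aa^*=bb^*$ for all arrows $a,b$ starting at a given vertex; the anti-automorphism is then just $\iota(a)=a^*$, $\iota(a^*)=a$, and checking $I^\op=\iota^{-1}(I)$ is immediate. You instead re-derive this structure from first principles via the symmetrizing form $\lambda$: the diagonal concentration of $\rad^2\subseteq\soc$ gives the ``out-and-back'' product structure, and the nondegenerate pairing $(a,b)\mapsto\lambda(ab)$ on $A_{ij}\times A_{ji}$ together with its trace symmetry $\lambda(ab)=\lambda(ba)$ yields both the arrow-count symmetry and the stability of the degree-two relation space under reversal --- your dual-basis choices are exactly the paper's unexplained ``changes of basis.'' Your route buys a self-contained proof independent of \cite{AA}; the paper's buys brevity. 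Two small points to nail down in your write-up: on the loop spaces $A_{ii}$ you should take $\varsigma=\mathrm{id}$ (the symmetry $\lambda(ab)=\lambda(ba)$ alone then preserves the kernel of $\ell_i(a\otimes b)=\lambda(ab)$; a blind dual-basis recipe on a symmetric form is awkward, e.g.\ in characteristic $2$), and your ``degenerate'' vertices with $\rad^2(e_i\Lambda)=0$ are forced by the socle condition to be entire blocks isomorphic to $K$ or $K[x]/(x^2)$, where the identity serves as the anti-automorphism, confirming they pose no problem.
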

\begin{proof}
By \cite[Proposition 3.3]{AA}, it turns out that the Gabriel quiver of $\Lambda$ is given by adding loops to the double quiver of a quiver $Q$; denote by $\widehat{Q}$ the quiver of $\Lambda$.
We also observe that $aa^*\neq0\neq a^*a$ for any arrow $a$ of $\widehat{Q}$ and $ab=0$ unless $b=a^*$ and $a=b^*$; if $a$ is an added loop, write $a^*=a$.
Thus, we get an isomorphism $\iota:\widehat{Q}^\op\to \widehat{Q}$ of quivers which fixes all vertices.

Let $i$ be a vertex of $\widehat{Q}$ and $a$ an arrow starting at $i$.
Since $\Lambda$ is symmetric, it is seen that $aa^*$ spans the socle of $P_i:=e_i\Lambda$ as a vector space.
Applying changes of basis, we have $aa^*=bb^*$ for every arrow $b$ of $\widehat{Q}$ starting from $i$.
Let $I$ denote the ideal of $K\widehat{Q}$ consisting of such relations; so $\Lambda\simeq K\widehat{Q}/I$.
Then, we obtain the equality $I^\op=\iota^{-1}(I)$, whence the assertion follows from Proposition \ref{qi}.
\end{proof}

\subsection{Selfinjective Nakayama algebras}

It is well-known that a selfinjective Nakayama algebra is presented by a cycle quiver $\xymatrix{\bullet \ar[r]_x & \bullet \ar[r]_x & \cdots \ar[r]_x & \bullet \ar@/_1pc/[lll]_x}$ with relations $x^r=0$ for some $r>0$.
Here is an easy application of Proposition \ref{qi}.

\begin{theorem}\label{Nakayama}
Let $\Lambda$ be a selfinjective Nakayama algebra and $P$ an indecomposable projective module of $\Lambda$.
Then we have a bijection between $\T_P^-$ and $\T_P^+$.
In particular, $|\twsilt\Lambda|=2\cdot|\T_P^-|$.
\end{theorem}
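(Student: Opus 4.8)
The plan is to reduce the statement to Proposition \ref{qi}, but with one genuine twist: the conclusion is wanted for \emph{every} indecomposable projective $P$, whereas any single anti-automorphism of a cyclic Nakayama algebra will only fix one or two vertices. The idea is therefore to choose, for each $P$, a \emph{different} anti-automorphism that fixes exactly the vertex attached to $P$.

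First I would fix the standard presentation $\Lambda\simeq KQ/I$, where $Q$ is the cycle quiver on the vertex set $\mathbb{Z}/n$ with arrows $x_i\colon i\to i+1$, and $I$ is the ideal generated by all paths of length $r$ (this is the content of the ``$x^r=0$'' relation recalled above). Given $P=e_v\Lambda$, I would introduce the reflection centered at $v$: on vertices set $\iota_v(w):=2v-w\pmod n$, and on arrows send the reversed arrow $x_i^*$ of $Q^\op$ (which runs $i+1\to i$) to the unique arrow $x_{2v-i-1}$ of $Q$ joining the images $2v-i-1\to 2v-i$ of its endpoints. By construction $\iota_v(v)=v$, so the vertex $v$ is fixed, and since $i\mapsto 2v-i-1$ is a bijection of $\mathbb{Z}/n$, the map $\iota_v$ is a bona fide isomorphism $Q^\op\to Q$ of quivers.

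It then remains to check the compatibility $I^\op=\iota_v^{-1}(I)$ required by Proposition \ref{qi}, and this is where the symmetric shape of the relations does the work. The ideal $I^\op$ is generated by the reversals of length-$r$ paths, which are again precisely the length-$r$ paths of $Q^\op$; as a quiver isomorphism, $\iota_v$ carries length-$r$ paths bijectively onto length-$r$ paths of $Q$, i.e.\ onto a generating set of $I$. Hence $\iota_v(I^\op)=I$, that is $I^\op=\iota_v^{-1}(I)$. Proposition \ref{qi} now yields $\T_P^-\stackrel{{\rm anti}}{\simeq}\T_P^+$ together with the counting formula $|\twsilt\Lambda|=2\cdot|\T_P^-|$ for the chosen $P$, and since $v$ was arbitrary this covers every indecomposable projective.

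The only genuine point to get right is the choice of the reflection: the ``obvious'' anti-automorphism $w\mapsto -w$ fixes merely the vertex $0$ (and $n/2$ when $n$ is even), so it cannot directly deliver the statement for all $P$. Recognizing that one should recenter the reflection at each target vertex $v$ is the crux of the argument; once that is in place, the verification of $I^\op=\iota_v^{-1}(I)$ is routine because the defining relations depend only on path length and are thus stable under both reversal and any quiver automorphism.
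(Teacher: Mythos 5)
Your proof is correct and follows essentially the same route as the paper: the paper also deduces Theorem \ref{Nakayama} as a direct application of Proposition \ref{qi}, observing (in the remark that follows it) that for each choice of vertex $i$ of the cycle quiver one gets an isomorphism $Q^\op\to Q$ fixing $i$, so that the resulting bijection $\T_{e_i\Lambda}^-\simeq\T_{e_i\Lambda}^+$ depends on recentering the reflection at each vertex. Your explicit construction of $\iota_v$ and the check $I^\op=\iota_v^{-1}(I)$ simply fill in the details the paper leaves implicit.
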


\begin{remark}
Let $\Lambda$ be a selfinjective Nakayama algebra given by a cycle quiver $Q$.
Whenever we choose a vertex $i$ of $Q$, one gets an isomorphism $Q^\op\to Q$ of quivers fixing $i$.
So, a bijection between $\T_{e_i\Lambda}^-$ and $\T_{e_i\Lambda}^+$ depends on the choice of vertices.
\end{remark}

\subsection{Group algebras}

Let $G$ be a finite group and $p$ the characteristic of $K$.
While the group algebra $KG$ is, in general, neither basic nor ring-indecomposable\footnote{It is well-known that if there is a normal $p$-subgroup of $G$ containing its centralizer, then $KG$ is ring-indecomposable; see \cite[Exercise V. 2. 10]{NT} for example.
}, it admits an anti-automorphism by $g\mapsto g^{-1}$; we can then apply Theorem \ref{symmetry} to $KG$.

The following situation enables us to apply Theorem \ref{double}.

\begin{theorem}\label{ga}
Let $G$ be a semidirect product $E\ltimes D$ of a $p'$-group $E$ (i.e., $p\nmid|E|$) on a $p$-group $D$.
Then there exists a primitive idempotent $e$ of $\Lambda:=KG$ such that $\T_{e\Lambda}^-$ bijectively corresponds to $\T_{e\Lambda}^+$.
In particular, $|\twsilt\Lambda|$ is double $|\T_{e\Lambda}^-|$.
\end{theorem}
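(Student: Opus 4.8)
The plan is to produce a primitive idempotent of $\Lambda=KG$ that is fixed by the anti-automorphism $\varsigma\colon g\mapsto g^{-1}$ recorded just above the statement, and then to invoke Theorem \ref{double}. Since $E$ is a $p'$-group, $|E|$ is invertible in $K$, so I can form the element
\[
\tilde e:=\frac{1}{|E|}\sum_{x\in E}x\in KG.
\]
First I would dispatch the two easy properties: the direct computation $\tilde e^2=\frac{1}{|E|^2}\sum_{x,y\in E}xy=\tilde e$ (using that $E$ is a subgroup) shows $\tilde e$ is idempotent, and $\varsigma(\tilde e)=\frac{1}{|E|}\sum_{x\in E}x^{-1}=\tilde e$ shows it is fixed by $\varsigma$. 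Hence the only real content is the primitivity of $\tilde e$.

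For primitivity I would pass to the quotient by the normal $p$-subgroup $D$. Consider the algebra surjection $\pi\colon KG\twoheadrightarrow K[G/D]\cong KE$ induced by $G\to G/D$, whose kernel is the two-sided ideal $J:=KG\cdot I(D)$ generated by $\{d-1:d\in D\}$, where $I(D)$ is the augmentation ideal of $KD$. Since $D$ is normal one has $I(D)\cdot KG=KG\cdot I(D)$, and since $D$ is a $p$-group in characteristic $p$ the ideal $I(D)$ is nilpotent in $KD$; combining these gives $J^n=KG\cdot I(D)^n=0$ for $n\gg0$, so $J$ is nilpotent and thus $J\subseteq\rad\Lambda$. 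As $KE$ is semisimple ($p\nmid|E|$), this forces $\rad\Lambda=J$ and $\Lambda/\rad\Lambda\cong KE$. Under $\pi$ the element $\tilde e$ maps to the idempotent $\frac{1}{|E|}\sum_{x\in E}x$ of $KE$ attached to the trivial $E$-representation; because that representation is one-dimensional, the corresponding block of the semisimple algebra $KE$ is $K$ itself, so this image is primitive. An idempotent of a finite-dimensional algebra is primitive precisely when its image modulo the radical is primitive, so $\tilde e$ is a primitive idempotent of $\Lambda$.

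Having exhibited a primitive idempotent $e:=\tilde e$ with $\varsigma(e)=e$, I would finish by applying Theorem \ref{double} with $\sigma:=\varsigma$ and $P:=e\Lambda$: it yields the bijection $\T_P^-\stackrel{{\rm anti}}{\simeq}\T_P^+$ and the equality $|\twsilt\Lambda|=2\cdot|\T_P^-|$.

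I expect the main obstacle to be the primitivity of $\tilde e$; everything else is formal. The crux is the identification $\rad\Lambda=KG\cdot I(D)$, which relies on $D$ being a \emph{normal} $p$-subgroup (so that the kernel of reduction modulo $D$ is a genuine two-sided nilpotent ideal) together with the semisimplicity of $KE$. Once this is in place, detecting primitivity modulo the radical reduces the question to the transparent fact that the trivial idempotent is primitive in the semisimple algebra $KE$.
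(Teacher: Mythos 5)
Your proposal is correct, and its skeleton coincides with the paper's: the same idempotent $e=\frac{1}{|E|}\sum_{x\in E}x$, the same anti-automorphism $g\mapsto g^{-1}$, the same concluding appeal to Theorem \ref{double}. The only genuine divergence is in the one nontrivial step, the primitivity of $e$, where you and the paper argue differently. The paper's argument is a one-liner exploiting the semidirect product decomposition $G=ED$ on the module level: since $ex=e$ for $x\in E$, one has $e\Lambda=eKG=eKD\simeq KD$ as right $KD$-modules, and $KD$ is a local algebra ($D$ a $p$-group in characteristic $p$), so $e\Lambda$ is indecomposable already over $KD$, hence over $\Lambda$. Your argument instead works on the algebra level: you identify $\rad\Lambda$ with the kernel $KG\cdot I(D)$ of the projection $KG\twoheadrightarrow K[G/D]\cong KE$ (using normality of $D$ and nilpotency of $I(D)$), observe that the image of $e$ is the block idempotent of the trivial representation in the semisimple algebra $KE$, and invoke the standard fact that primitivity of an idempotent in a finite-dimensional algebra can be detected modulo the radical. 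Both routes are sound; the paper's is shorter and more self-contained, while yours has the merit of exhibiting the full structure $\Lambda/\rad\Lambda\cong KE$, which makes transparent exactly how the hypothesis $p\nmid|E|$ enters and would let you locate every primitive idempotent (not just the trivial one) if needed. One small remark: your lifting step (primitive mod radical implies primitive) is correct but deserves a citation or the one-line justification $eAe/e(\rad A)e\cong\bar{e}(A/\rad A)\bar{e}$ together with the characterization of primitivity via locality of $eAe$, since it is the linchpin of your reduction.
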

\begin{proof}
As the argument above, we know that $\Lambda$ admits an anti-automorphism $\sigma\ (g\mapsto g^{-1})$.
Since $|E|$ is invertible in $K$, we put $e:=\frac{1}{|E|}\sum_{g\in E}g$; clearly, it is an idempotent fixed by $\sigma$.
It is seen that $e\Lambda=eKG=eKD\simeq KD$ (as $KD$-modules), which implies that $e$ is primitive.
Thus, we deduce the assertion from Theorem \ref{double}.
\end{proof}

We obtain an interesting observation.

\begin{corollary}\label{tSm}
Let $\Lambda$ be a $p$-block of $KG$ with a normal defect group $D$ and $E$ its inertial quotient. If $E$ has trivial Schur multiplier (i.e., $H^2(E, K^\times)=1$), then the number of 2-term silting objects is even if it is finite.
\end{corollary}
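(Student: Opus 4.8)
The plan is to reduce Corollary \ref{tSm} to the semidirect-product situation of Theorem \ref{ga} by invoking the known structure theory of blocks with normal defect group. The key structural input is the theorem of Külshammer (building on Fong--Reynolds), which says that a block $B$ of $KG$ with normal defect group $D$ and inertial quotient $E$, under the hypothesis that $E$ has trivial Schur multiplier, is Morita equivalent to a twisted group algebra $K_\alpha[D\rtimes E]$, and that triviality of $H^2(E,K^\times)$ forces the relevant 2-cocycle $\alpha$ to be trivial, so $B$ is Morita equivalent to the ordinary group algebra $K[E\ltimes D]$ of the semidirect product. First I would quote this Morita equivalence precisely from the literature so that the block $B$ and the group algebra $K[E\ltimes D]$ have the same module category.

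Next I would use the fact that the poset $\twsilt\Lambda$ is a Morita invariant: a Morita equivalence $\mod B\simeq\mod K[E\ltimes D]$ carries support $\tau$-tilting modules to support $\tau$-tilting modules and preserves the partial order, hence induces a poset isomorphism $\twsilt B\simeq\twsilt K[E\ltimes D]$. In particular $|\twsilt B|=|\twsilt K[E\ltimes D]|$, and finiteness of one side is equivalent to finiteness of the other. Then Theorem \ref{ga} applies directly to $K[E\ltimes D]$: since $E$ is a $p'$-group acting on the $p$-group $D$, there is a primitive idempotent $e$ with $\T_{e\Lambda}^-\simeq\T_{e\Lambda}^+$, so $|\twsilt K[E\ltimes D]|=2\cdot|\T_{e\Lambda}^-|$ is even whenever it is finite. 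Transporting this through the Morita equivalence yields that $|\twsilt B|$ is even if finite.

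The main obstacle, and the step requiring the most care, is the precise form of the structure theorem and the exact role of the Schur multiplier hypothesis. One must be sure that triviality of $H^2(E,K^\times)$ is what untwists the algebra; the twisted group algebra $K_\alpha[D\rtimes E]$ is Morita equivalent to the ordinary one exactly when the class of $\alpha$ in $H^2(E,K^\times)$ (pulled back appropriately) vanishes, and verifying that the obstruction class indeed lives in $H^2(E,K^\times)$—rather than in some larger cohomology group—is the delicate point. A secondary issue is the hypothesis that $D$ is \emph{normal} in $G$, which is precisely what makes the Fong--Reynolds reduction and Külshammer's description available; without it one would have no semidirect-product model to feed into Theorem \ref{ga}. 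Once these structural citations are in place, the combinatorial conclusion is immediate from the already-established parity statement, so the whole proof is essentially a citation of the block-theoretic reduction followed by an application of Theorem \ref{ga}.
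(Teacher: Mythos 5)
Your proposal is correct and follows essentially the same route as the paper: invoke K\"ulshammer's theorem to get a Morita equivalence between $\Lambda$ and the twisted group algebra $K^\alpha[E\ltimes D]$, use the trivial Schur multiplier hypothesis to untwist it to $K[E\ltimes D]$, and then apply Theorem \ref{ga}. The only difference is that you make explicit the Morita invariance of the poset $\twsilt(-)$, which the paper leaves implicit.
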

\begin{proof}
Thanks to K\"ulshammer's theorem \cite[Theorem A]{K}, we see that $\Lambda$ is Morita equivalent to the twisted group algebra $K^\alpha[E\ltimes D]$ for some 2-cocycle $\alpha$, which is just $K[E\ltimes D]$ by assumption.
Thus, we find out that $|\twsilt\Lambda|$ is even by Theorem \ref{ga}.
\end{proof}

It is known that groups of deficiency zero have the trivial Schur multiplier; see \cite{J}.
Here, the \emph{deficiency} of a group $G$ is defined to be the maximum of the integers $|X|-|R|$ for all presentations $G=\langle X\ |\ R \rangle$ of $G$, which is nonpositive if $G$ is a finite group.
Typical examples of deficiency-zero finite groups are cyclic groups $\langle g\ |\ g^n=1 \rangle$ and quaternion groups $\langle a,b\ |\ a^{2n}=1, a^n=b^2, ba=a^{-1}b \rangle=\langle a,b\ |\ bab=a^{n-1}, aba=b \rangle$.
Thus, the first example of Corollary \ref{tSm} should be the case that $D$ is cyclic; then, $E$ is automatically cyclic, $\Lambda$ is a symmetric Nakayama algebra \cite[Theorem 17.2]{Al}, and so $|\twsilt\Lambda|=\binom{2n}{n}$ (even), where $n:=|E|$ \cite[Corollary 2.29]{A}.
Moreover, the equality $|\twsilt\Lambda|=\binom{2n}{n}$ holds even if we drop the assumption of $D$ being normal in $G$;
then, $\Lambda$ is still a Brauer tree algebra \cite[Theorem 17.1]{Al}, whence the equality is obtained from \cite[Theorem 5.1]{AMN}.

\subsection{Trivial extension algebras}
The \emph{trivial extension} $T(\Lambda)$ of an algebra $\Lambda$ (by its minimal cogenerator $D\Lambda$) is defined to be $\Lambda\oplus D\Lambda$ as a $K$-vector space with multiplication given by $(a,f)\cdot (b,g):=(ab,ag+fb)$.
Here, $D$ denotes the $K$-dual.
We can easily verify that there is a one-to-one correspondence between simple modules of $\Lambda$ and $T(\Lambda)$;
so we use the same symbol $e$ as a primitive idempotent of $\Lambda$ and $T(\Lambda)$ (via the correspondence).

We state that a bisection of $\twsilt\Lambda$ can be extended to that of $\twsilt T(\Lambda)$.

\begin{theorem}\label{trivial-extension}
An anti-automorphism $\sigma$ of $\Lambda$ induces one on $T(\Lambda)$, say $\overline{\sigma}$.
If $\sigma$ fixes a primitive idempotent $e$ of $\Lambda$, then the corresponding idempotent $e$ of $T(\Lambda)$ is stable by $\overline{\sigma}$.
In the case, we have a bisection of $\twsilt T(\Lambda)$ with respect to $P:=eT(\Lambda)$.
\end{theorem}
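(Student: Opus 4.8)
The plan is to construct the induced anti-automorphism $\overline{\sigma}$ on $T(\Lambda)=\Lambda\oplus D\Lambda$ explicitly, check that the distinguished idempotent is fixed, and then read off the conclusion from Theorem \ref{double}. Recall that $D\Lambda=\Hom_K(\Lambda,K)$ carries the $\Lambda$-bimodule structure $(a\cdot f)(x)=f(xa)$ and $(f\cdot b)(x)=f(bx)$, which is exactly what makes the multiplication $(a,f)(b,g)=(ab,ag+fb)$ associative. Writing $\sigma$ for the anti-automorphism $\Lambda\to\Lambda$ (so $\sigma$ is a $K$-linear bijection with $\sigma(xy)=\sigma(y)\sigma(x)$), I would define
\[\overline{\sigma}(a,f):=(\sigma(a),\,f\circ\sigma^{-1}),\]
that is, $\overline{\sigma}$ acts as $\sigma$ on $\Lambda$ and as precomposition with $\sigma^{-1}$ on $D\Lambda$.

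Since each component is a $K$-linear bijection, $\overline{\sigma}$ is itself a $K$-linear bijection, and the only real content is to verify anti-multiplicativity $\overline{\sigma}((a,f)(b,g))=\overline{\sigma}(b,g)\,\overline{\sigma}(a,f)$. Expanding both sides, the $\Lambda$-component reduces to $\sigma(ab)=\sigma(b)\sigma(a)$, while the $D\Lambda$-component reduces to the two identities $(ag)\circ\sigma^{-1}=(g\circ\sigma^{-1})\cdot\sigma(a)$ and $(fb)\circ\sigma^{-1}=\sigma(b)\cdot(f\circ\sigma^{-1})$. Each of these is a short direct computation: evaluating at $x\in\Lambda$ and applying the bimodule formulas together with $\sigma(xy)=\sigma(y)\sigma(x)$, both sides collapse to $g(\sigma^{-1}(x)a)$ and $f(b\,\sigma^{-1}(x))$ respectively. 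This establishes that $\overline{\sigma}$ is an anti-automorphism of $T(\Lambda)$.

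For the idempotent, note that the kernel $D\Lambda$ of the projection $T(\Lambda)\to\Lambda$ satisfies $(D\Lambda)^2=0$, so it is nilpotent; hence a primitive idempotent $e$ of $\Lambda$ gives a primitive idempotent $(e,0)$ of $T(\Lambda)$, which is precisely the correspondence the paper records by reusing the symbol $e$. Then $\overline{\sigma}(e,0)=(\sigma(e),\,0\circ\sigma^{-1})=(\sigma(e),0)$, so the hypothesis $\sigma(e)=e$ forces $\overline{\sigma}(e,0)=(e,0)$; that is, the idempotent of $T(\Lambda)$ corresponding to $e$ is stable under $\overline{\sigma}$. With $\overline{\sigma}$ an anti-automorphism of $T(\Lambda)$ fixing this primitive idempotent, Theorem \ref{double} applies verbatim to $T(\Lambda)$ and yields the bijection $\T_P^-\simeq\T_P^+$ for $P:=eT(\Lambda)$, i.e.\ the asserted bisection of $\twsilt T(\Lambda)$.

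The main point requiring care is a matter of convention in the second paragraph: one must twist the $D\Lambda$-component by $\sigma^{-1}$ rather than by $\sigma$. Using $f\circ\sigma$ instead would break anti-multiplicativity unless $\sigma^2=\mathrm{id}$, since matching the two sides would impose the spurious condition $\sigma^2(a)=a$. Pairing the correct inverse twist with the left/right bimodule actions is the only delicate step; once the formula for $\overline{\sigma}$ is pinned down, the rest is formal and the conclusion is immediate from Theorem \ref{double}.
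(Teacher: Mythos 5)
Your proposal is correct and follows essentially the same route as the paper: the paper also twists the dual component by precomposition with $\sigma^{-1}$ (its map $t_\sigma:=\Hom_K(\sigma^{-1},K)$, so $\overline{\sigma}(a,f)=(\sigma(a),f\circ\sigma^{-1})$), verifies the same two bimodule compatibilities, and then invokes Theorem \ref{double}. The only cosmetic difference is that the paper packages $\overline{\sigma}$ as an algebra isomorphism $T(\Lambda^\op)=T(\Lambda)^\op\to T(\Lambda)$ rather than as an anti-multiplicative self-map, which is the same thing.
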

\begin{proof}
Note that $T(\Lambda)^\op=T(\Lambda^\op)$. Since $\sigma^{-1}:\Lambda\to \Lambda^\op$ is an algebra isomorphism, we have a $K$-linear automorphism $t_\sigma:=\Hom_K(\sigma^{-1}, K):D(\Lambda^\op)\to D\Lambda$ of $D\Lambda$. For any $a, b\in \Lambda^\op$ and $f\in D(\Lambda^\op)$, we get equalities
\[\def\arraystretch{1.5}
\begin{array}{rl}
t_\sigma(a\bullet f\bullet b)(x) &= (a\bullet f\bullet b)(\sigma^{-1}(x))=f(b\bullet \sigma^{-1}(x)\bullet a)
 = f(\sigma^{-1}(\sigma(b) x\sigma(a))) \\
 &= t_\sigma(f)(\sigma(b)x\sigma(a))=(\sigma(a) t_\sigma(f)\sigma(b))(x).
\end{array}\]
Here, $\bullet$ stands for the multiplication or the action of $\Lambda^\op$. It turns out that
$$
t_\sigma(a\bullet f\bullet b)=\sigma(a)t_\sigma(f)\sigma(b).
$$

Now, we define a $K$-linear automorphism $\overline{\sigma}: T(\Lambda^\op)\to T(\Lambda)$ by $(a, f)\mapsto (\sigma(a), t_\sigma(f))$. Let us check that $\overline{\sigma}$ is an anti-automorphism of $T(\Lambda)$;
for any $a, b\in \Lambda^\op$ and $f, g\in D(\Lambda^\op)$,
\[
\def\arraystretch{1.5}
\begin{array}{rl}
\overline{\sigma}((a, f)\bullet (b,g))
 &= \overline{\sigma}(a\bullet b, a\bullet g+f\bullet b)   =(\sigma(a\bullet b), t_\sigma(a\bullet g+f\bullet b)) \\
 &= (\sigma(a)\sigma(b), \sigma(a)t_\sigma(g)+t_\sigma(f)\sigma(b)) \\
 &= (\sigma(a), t_\sigma(f))\cdot  (\sigma(b), t_\sigma(g))\\
 &= \overline{\sigma}(a, f)\cdot \overline{\sigma}(b, g).
\end{array}\]
Thus, the first assertion holds.
As the second assertion is clear, the last one immediately follows from Theorem \ref{double}.
\end{proof}

\begin{remark}
Theorem \ref{trivial-extension} does not imply that taking trivial extensions transmits the $\tau$-tilting finiteness.
In fact, the radical-square-zero selfinjective Nakayama algebra with 2 simple modules is $\tau$-tilting finite, but its trivial extension is not so.
\end{remark}

\subsection{Applying the main theorem twice}
In this subsection, we try applying Theorem \ref{double} twice in a row.
Let us show the following.

\begin{theorem}\label{twice}
Assume that $\Lambda$ is basic and admits an anti-automorphism $\sigma$ fixing a primitive idempotent $e$ of $\Lambda$; write $P:=e\Lambda$.
Let $P'$ be the mapping cone of a minimal left $\add(\Lambda/P)$-approximation of $P$;
that is, $\mu_P^-(\Lambda)=P'\oplus \Lambda/P$.
Putting $\Gamma:=\End_{\K_\Lambda}(\mu_P^-(\Lambda))$,
$e'$ denotes the idempotent of $\Gamma$ corresponding to $P'$.
Assume that the following hold:
\begin{enumerate}
\item $\mu_P^-(\Lambda)$ is tilting;
\item There is an anti-automorphism $\sigma'$ of $\Gamma$ satisfying $\sigma'(e')=e'$.
\end{enumerate}
Then, we have a poset isomorphism $\T_{P}^-\simeq\T_{e'\Gamma}^+$ and $|\twsilt\Lambda|=|\twsilt\Gamma|$.
\end{theorem}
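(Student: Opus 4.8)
The plan is to leverage the derived equivalence supplied by assumption (1) and then reduce the cardinality statement to two applications of Theorem \ref{double}. First I would record the basic bookkeeping: since $\Lambda$ is basic and $e$ is primitive, $P=e\Lambda$ is an indecomposable summand of $\Lambda$, so $\mu_P^-(\Lambda)=P'\oplus\Lambda/P$ is again basic with $P'$ indecomposable; hence $\Gamma=\End_{\K_\Lambda}(\mu_P^-(\Lambda))$ is a basic algebra and $e'$ is one of its primitive idempotents. Because $\mu_P^-(\Lambda)$ is \emph{tilting} (not merely silting), the associated tilting equivalence $F\colon\K_\Lambda\to\K_\Gamma$ is a triangle equivalence carrying $\mu_P^-(\Lambda)\mapsto\Gamma$, $P'\mapsto e'\Gamma$ and $\Lambda/P\mapsto(1-e')\Gamma$. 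By \cite{AI} this $F$ induces an isomorphism of posets $\silt\Lambda\to\silt\Gamma$ that commutes with silting mutation and with the shift $[1]$. This is precisely the place where hypothesis (1) is indispensable.

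Next I would rewrite both sets as intervals in the ambient silting posets. From the definition of $\T_P^-$ together with $\twsilt\Lambda=\{T\mid\Lambda\geq T\geq\Lambda[1]\}$ and $\mu_P^-(\Lambda)\leq\Lambda$, one reads off $\T_P^-=[\Lambda[1],\mu_P^-(\Lambda)]$ inside $\silt\Lambda$; symmetrically, using $\mu_{e'\Gamma[1]}^+(\Gamma[1])\geq\Gamma[1]$, every element of $[\mu_{e'\Gamma[1]}^+(\Gamma[1]),\Gamma]$ automatically lies in $\twsilt\Gamma$, so $\T_{e'\Gamma}^+=[\mu_{e'\Gamma[1]}^+(\Gamma[1]),\Gamma]$ inside $\silt\Gamma$. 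Since $F$ is an order isomorphism with $F(\mu_P^-(\Lambda))=\Gamma$, it restricts to an isomorphism $\T_P^-\xrightarrow{\ \sim\ }[F(\Lambda[1]),\Gamma]$; in particular the image lands in $\twsilt\Gamma$ as a byproduct. Thus the whole poset statement collapses to the single identity $F(\Lambda[1])=\mu_{e'\Gamma[1]}^+(\Gamma[1])$.

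This identity is the crux, and the step I expect to be the main obstacle. I would argue as follows: left and right mutation at corresponding summands are mutually inverse \cite{AI}, and the summand newly created in passing from $\Lambda$ to $\mu_P^-(\Lambda)$ is $P'$, so $\Lambda=\mu_{P'}^+(\mu_P^-(\Lambda))$. Applying the mutation-commuting equivalence $F$ gives $F(\Lambda)=\mu_{F(P')}^+(F(\mu_P^-(\Lambda)))=\mu_{e'\Gamma}^+(\Gamma)$. Because the shift functor is a triangle autoequivalence it too commutes with mutation, whence $F(\Lambda[1])=\mu_{e'\Gamma}^+(\Gamma)[1]=\mu_{e'\Gamma[1]}^+(\Gamma[1])$, exactly as needed. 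The difficulty here is purely one of careful tracking: one must take the inverse mutation at the \emph{new} summand $P'$ rather than at $P$, confirm that $F$ sends $P'$ to $e'\Gamma$, and thread the shift through consistently. Together with the previous paragraph this yields the poset isomorphism $\T_P^-\simeq\T_{e'\Gamma}^+$.

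The cardinality equality is then formal. Applying Theorem \ref{double} to the datum $(\Lambda,\sigma,e)$ gives $|\twsilt\Lambda|=2\cdot|\T_P^-|$, and applying it to $(\Gamma,\sigma',e')$—which is legitimate since $\Gamma$ is basic with $e'$ primitive and $\sigma'(e')=e'$ by assumption (2)—gives $|\twsilt\Gamma|=2\cdot|\T_{e'\Gamma}^+|$. Combining these with $|\T_P^-|=|\T_{e'\Gamma}^+|$ from the poset isomorphism yields $|\twsilt\Lambda|=|\twsilt\Gamma|$, completing the proof.
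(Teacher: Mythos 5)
Your proposal is correct and follows essentially the same route as the paper: both identify $\twsilt\Gamma$ with the interval $[\mu_P^-(\Lambda)[1],\mu_P^-(\Lambda)]$ via the tilting equivalence, use that right mutation at the new summand $P'$ inverts $\mu_P^-$ so that one half of the Lemma \ref{dju0} decomposition is exactly $\T_P^-$, and then apply Theorem \ref{double} to both $(\Lambda,\sigma,e)$ and $(\Gamma,\sigma',e')$. The only difference is presentational---the paper works entirely inside $\silt\Lambda$ while you transport $\T_P^-$ across the equivalence into $\silt\Gamma$---but the mathematical content is identical.
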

\begin{proof}
As $\mu_P^-(\Lambda)$ is tilting, we identify $\twsilt\Gamma$ with $\{T\in\silt\Lambda\ |\ \mu_P^-(\Lambda)\geq T\geq \mu_P^-(\Lambda)[1] \}$.
By Lemma \ref{dju0}, we have an equality:
\[\def\arraystretch{1.3}
\begin{array}{l}
\{T\in\silt\Lambda\ |\ \mu_P^-(\Lambda)\geq T\geq \mu_P^-(\Lambda)[1] \} \\
=\{T\in\silt\Lambda\ |\ \mu_{P'}^-\mu_P^-(\Lambda)\geq T\geq \mu_P^-(\Lambda)[1] \}\sqcup\{T\in\silt\Lambda\ |\ \mu_P^-(\Lambda)\geq T\geq \Lambda[1]\},
\end{array}\]
in which the components of RHS have the same cardinality by Theorem \ref{double}.
Thus, the cardinality of LHS
in the equality is the double of that of $\T_P^-$,
which is equal to the cardinality of $\twsilt\Lambda$.
\end{proof}

We give two examples; one illustrates Theorem \ref{twice}, and the other explains that a derived equivalence does not necessarily preserve the cardinality of the poset $\twsilt(-)$ even if a given algebra is a symmetric algebra which admits an anti-automorphism fixing a primitive idempotent.

\begin{example}
Let $\Lambda$ be the algebra presented by the quiver with relations as follows:
\[\begin{array}{c@{\hspace{1cm}}c}
\vcenter{
\xymatrix{
 & 1 \ar[dr]^\beta &  \\
2 \ar[ru]^\alpha \ar@<2pt>[rr]^{\gamma^*} & & 3 \ar@<2pt>[ll]^\gamma
}
} &
\mbox{$
\begin{cases}
\ \beta\gamma\alpha=0=\gamma(\gamma^*\gamma)^3 \\
\ \alpha\beta=\gamma^*\gamma\gamma^*
\end{cases}
$}
\end{array}
\]
Note that $\Lambda$ is symmetric and admits an anti-automorphism which fixes the vertex 1 and switches the vertices 2 and 3.
Set $P_i:=e_i\Lambda$.
\begin{enumerate}
\item Let $T_1$ be the left mutation of $\Lambda$ with respect to $P_1$.
By hand, we can check that the endomorphism algebra $\Gamma_1$ of $T_1$ is given by the quiver with relations:
\[\begin{array}{c@{\hspace{1cm}}c}
\vcenter{
\xymatrix{
2 \ar@<2pt>[r]^\alpha & 1 \ar@<2pt>[r]^{\beta}\ar@<2pt>[l]^{\alpha^*} & 3 \ar@<2pt>[l]^{\beta^*}
}
} &
\mbox{$
\begin{cases}
\ \alpha\beta\beta^*\beta=\beta^*\beta\beta^*\alpha^*=\alpha\alpha^*=0 \\
\ \alpha^*\alpha=(\beta\beta^*)^2
\end{cases}
$}
\end{array}
\]
It is obtained that $\Gamma_1$ has an anti-automorphism fixing the vertex 1.
Thus, we derive from Theorem \ref{twice} that $\twsilt\Lambda$ and $\twsilt\Gamma_1$ has the same cardinality; it is illustrated by (anti-)isomorphisms $\T_{(P_1)_\Lambda}^+\stackrel{{\rm anti}}{\simeq}\T_{(P_1)_\Lambda}^-\simeq\T_{(P_1)_{\Gamma_1}}^+\stackrel{{\rm anti}}{\simeq}\T_{(P_1)_{\Gamma_1}}^-$.
Actually, $\twsilt\Lambda$ and $\twsilt\Gamma_1$ are finite sets and the numbers are 32 \cite[Theorem 2]{AHMW}.
\item Let $T_2$ be the left mutation of $\Lambda$ with respect to $P_2$.
We have the endomorphism algebra $\Gamma_2$ presented by the quiver with relations:
\[\begin{array}{c@{\hspace{1cm}}c}
\vcenter{
\xymatrix{
1 \ar@<2pt>[r]^\beta & 2 \ar@<2pt>[r]^{\gamma}\ar@(lu,ru)^\alpha\ar@<2pt>[l]^{\beta^*} & 3 \ar@<2pt>[l]^{\gamma^*}
}
} &
\mbox{$
\begin{cases}
\ \beta\gamma=\beta\beta^*=0=\gamma^*\beta^*=\gamma^*\alpha \\
\ \alpha\gamma=0 \\
\ \alpha^2=\beta^*\beta \\
\ \alpha^3=\gamma\gamma^*
\end{cases}
$}
\end{array}
\]
Unfortunately, the cardinality of $\twsilt\Gamma_2$ is 28 by \cite[Theorem 2]{AHMW}.
Since $\Gamma_2$ admits an anti-automorphism fixing the vertex 2, a similar argument as the proof of Theorem \ref{twice} explains that $\T_{(P_2)_\Lambda}^-\simeq\T_{(P_2)_{\Gamma_2}}^+\stackrel{{\rm anti}}{\simeq}\T_{(P_2)_{\Gamma_2}}^-$, and so we obtain $|\T_{(P_2)_\Lambda}^-|=14$ and $|\T_{(P_2)_\Lambda}^+|=18$.
(Note that $\T_{(P_2)_\Lambda}^-\stackrel{{\rm anti}}{\simeq}\T_{(P_3)_\Lambda}^+$; so, $|\T_{(P_3)_\Lambda}^-|=18$ and $|\T_{(P_3)_\Lambda}^+|=14$.)

When $U_3$ is the left mutation of $\Gamma_2$ with respect to $P_3$, the endomorphism algebra of $U_3$ is isomorphic to $\Lambda$.
This says that a derived equivalence does not necessarily preserve the number of $\twsilt(-)$, although $\Gamma_2$ is symmetric and admits an anti-automorphism fixing all vertices.
\end{enumerate}
\end{example}

There are some special derived equivalence classes of algebras for which the cardinalities of $2\silt(-)$ are constant, but the proofs are case by case for each algebra. Using Theorem 2.14, we may give an explicit example of such classes.

\begin{example}\label{A5A4}
Let $\Lambda$ be the multiplicity-free Brauer triangle algebra; that is, it is given by the quiver with relations as follows.
\[\begin{array}{c@{\hspace{1cm}}c}
\vcenter{
\xymatrix{
&1 \ar@<-2pt>[dl]_a\ar@<-2pt>[dr]|{a^*}& \\
2 \ar@<-2pt>[ur]|{a^*} \ar@<-2pt>[rr]_a &&3 \ar@<-2pt>[ul]_a \ar@<-2pt>[ll]_{a^*}
}
} &
\mbox{$
\begin{cases}
\ aa^*=a^*a \\
\ a^2=0=(a^*)^2
\end{cases}
$}
\end{array}
\]
We see that $\Lambda$ admits an anti-automorphism fixing every vertex; cf. Theorem \ref{rcz}.

Let $P:=e_1\Lambda$ and $\Gamma$ denote the endomorphism algebra of the left mutation $\mu_P^-(\Lambda)$; note that $\mu_P^-(\Lambda)$ is a tilting object in $\K_\Lambda$, and so $\Lambda$ and $\Gamma$ are derived equivalent.
By hand, we obtain that $\Gamma$ is presented by the quiver with relations:
\[\begin{array}{c@{\hspace{1cm}}c}
\vcenter{
\xymatrix{
2 \ar@<2pt>[r]^a & 1 \ar@<2pt>[r]^{b^*}\ar@<2pt>[l]^{a^*} & 3 \ar@<2pt>[l]^b
}
} &
\mbox{$
\begin{cases}
\ aa^*=0=bb^* \\
\ a^*ab^*b=b^*ba^*a
\end{cases}
$}
\end{array}
\]
Observe that $\Gamma$ admits an anti-automorphism fixing all vertices.

Thus, it turns out by Theorem \ref{twice} that $\twsilt\Lambda$ and $\twsilt\Gamma$ have the same cardinality;
actually, they are finite sets and the numbers are 32.
See $D(3K)$ and $D(3A)_1$ in Table 1 of \cite{EJR}.
Moreover, the class $\{\Lambda, \Gamma\}$ forms a derived equivalence class.
\end{example}


\section*{Acknowledgments}

The second author is grateful to Susumu Ariki for various conversations and lectures.
He also thanks Sota Asai and Kengo Miyamoto for useful discussions.

\section*{Conflict of Interest Statement}
Not applicable.



\begin{thebibliography}{AAAA}
\bibitem[Ad]{A}
{\sc T. Adachi},
The classification of $\tau$-tilting modules over Nakayama algebras.
{\it J. Algebra} {\bf 452} (2016), 227--262.

\bibitem[AA]{AA}
{\sc T. Adachi and T. Aoki},
The number of two-term tilting complexes over symmetric algebras with radical cube zero.
Preprint (2018), arXiv: 1805.08392.

\bibitem[AIR]{AIR}
{\sc T. Adachi, O. Iyama and I. Reiten},
$\tau$-tilting theory.
{\it Compos. Math.} {\bf 150}  (2014), no. 3, 415--452.

\bibitem[AHMW]{AHMW}
{\sc T. Aihara, T. Honma, K. Miyamoto and Q. Wang},
Report on the finiteness of silting objects.
{\it Proc. Edinb. Math. Soc. (2)} {\bf 64} (2021), no. 2, 217--233.

\bibitem[AI]{AI}
{\sc T. Aihara and O. Iyama},
Silting mutation in triangulated categories.
{\it J. Lond. Math. Soc. (2)} {\bf 85} (2012), no. 3, 633--668.

\bibitem[Al]{Al}
{\sc J. L. Alperin},
Local representation theory.
Cambridge Studies in Advanced Mathematics, {\bf 11}.
{\it Cambridge University Press, Cambridge}, 1986.

\bibitem[AMN]{AMN}
{\sc H. Asashiba, Y. Mizuno and K. Nakashima},
Simplicial complexes and tilting theory for Brauer tree algebras.
{\it J. Algebra} {\bf 551} (2020), 119--153.

\bibitem[AS]{AS-Hecke}
{\sc S. Ariki and L. Speyer},
Schurian-finiteness of blocks of type A Hecke algebras
Preprint (2022), arXiv: 2112.11148.

\bibitem[EJR]{EJR}
{\sc F. Eisele, G. Janssens and T. Raedschelders},
A reduction theorem for $\tau$-rigid modules.
{\it Math. Z.} {\bf 290} (2018), no. 3--4, 1377--1413.

\bibitem[GL]{GL-cellular}
{\sc J. J. Graham and G. I. Lehrer},
Cellular algebras.
{\it Invent. Math.} {\bf 123} (1996), no. 1, 1--34.

\bibitem[GP]{GP}
{\sc I. M. Gelfand and V. A. Ponomarev},
Model algebras and representations of graphs.
{\it Funktsional. Anal. i Prilozhen.} {\bf 13} (1979), no. 3, 1--12.

\bibitem[J]{J}
{\sc D. L. Johnson},
Presentations of groups. London Mathematical Society Student Texts, {\bf 15}.
{\it Cambridge University Press, Cambridge}, 1976.

\bibitem[K]{K}
{\sc B. Kulshammer},
Crossed products and blocks with normal defect groups.
{\it Comm. Algebra} {\bf 13} (1985), no. 1, 147--168.

\bibitem[KX]{KX-cellular}
{\sc S. Konig and C. Xi},
On the structure of cellular algebras.
{\it Algebra and modules, II}, 365--386,
CMS Conf. Proc., {\bf 24}, {\it Amer. Math. Soc., Providence, RI}, 1998.

\bibitem[M]{Mizuno-preprojective-alg}
{Y. Mizuno},
Classifying $\tau$-tilting modules over preprojective algebras of Dynkin type.
{\it Math. Z.}, {\bf 277} (2014), no. 3-4, 665--690.

\bibitem[NT]{NT}
{\sc H. Nagao and Y. Tsushima},
Representations of finite groups.
{\it Academic Press, Inc., Boston, MA}, 1989.

\bibitem[W]{W-schur}
{\sc Q. Wang},
On $\tau$-tilting finiteness of the Schur algebra.
{\it J. Pure Appl. Algebra}, {\bf 226 (1)} (2022), 106818.

\bibitem[X]{X}
{\sc C. Xi},
Quasi-hereditary algebras with a duality.
{\it J. reine angew. Math.} {\bf 449} (1994), 201--215.

\end{thebibliography}
\end{document}